\title{Diagonal asymptotics for symmetric rational functions via ACSV}
\author{Yuliy Baryshnikov}{University of Illinois, Department of Mathematics\\{273 Altgeld Hall 1409 W. Green Street (MC-382), Urbana, IL 61801, USA}}{ymb@illinois.edu}{}{Partially supported by NSF grant DMS-1622370}
\author{Stephen Melczer}{University of Pennsylvania, Department of Mathematics\\{209 South 33rd Street, Philadelphia, PA 19104, USA}}{smelczer@sas.upenn.edu}{https://orcid.org/0000-0002-0995-3444}{Partially supported by an NSERC postdoctoral fellowship and NSF grant DMS-1612674}
\author{Robin Pemantle}{University of Pennsylvania, Department of Mathematics\\{209 South 33rd Street, Philadelphia, PA 19104, USA}}{pemantle@math.upenn.edu}{}{Partially supported by NSF grant DMS-1612674}
\author{Armin Straub}{University of South Alabama, Department of Mathematics and Statistics\\{411 University Blvd N, MSPB 325, Mobile, AL 36688, USA}}{straub@southalabama.edu}{https://orcid.org/0000-0001-6802-6053}{Partially supported by a Simons Collaboration Grant}
\authorrunning{Y. Baryshnikov, S. Melczer, R. Pemantle and A. Straub}
\subjclass{Mathematics of computing $\rightarrow$ Combinatorics}% mandatory: Please choose ACM 2012 classifications from https://www.acm.org/publications/class-2012 or https://dl.acm.org/ccs/ccs_flat.cfm . E.g., cite as "General and reference $\rightarrow$ General literature" or \ccsdesc[100]{General and reference~General literature}. 
\keywords{Analytic combinatorics, Generating function, Coefficient, Lacuna, Positivity, Morse theory, D-finite, Smooth point}%mandatory
\def\Z{\mathbb{Z}}
\def\R{\mathbb{R}}
\def\RP{\mathbb{RP}}
\def\C{\mathbb{C}}
\def\CC{{\cal C}}
\def\ee{\varepsilon}
\def\|{{\, | \, }}
\def\xx{{\bf x}}
\def\yy{{\bf y}}
\def\rr{{\bf r}}
\def\zz{{\bf z}}
\def\ww{{\bf w}}
\def\fd{\delta^Q}
\def\M{{\cal M}}
\def\Md{{\cal M}_d}
\def\cD{{\cal D}}
\def\D{{\cal D}}
\def\curv{{\mathcal K}}
\def\nbd{{\mathcal N}}
\def\grad{{\nabla}}
\def\diag{\textsf{diag}}
\def\crit{\textsf{crit}}
\def\zero{{\bf 0}}
\def\sing{{\mathcal V}}
\def\TT{{\bf T}}
\def\rhat{\hat{\rr}}
\def\hess{{\mathcal H}}
\newcommand{\bbeta}{\ensuremath{\boldsymbol \beta}}
\theoremstyle{plain}
\newtheorem{prop}[theorem]{Proposition}
\newtheorem{conj}[theorem]{Conjecture}
\newtheorem{question}[theorem]{Question}
\newtheorem*{unremark}{Remark}
\begin{document}

\maketitle

\begin{abstract}
We consider asymptotics of power series coefficients of rational 
functions of the form $1/Q$ where $Q$ is a symmetric multilinear 
polynomial.  We review a number of such cases from the literature, 
chiefly concerned either with positivity of coefficients or diagonal 
asymptotics.  We then analyze coefficient asymptotics using ACSV 
(Analytic Combinatorics in Several Variables) methods.  While ACSV 
sometimes requires considerable overhead and geometric computation, 
in the case of symmetric multilinear rational functions there are 
some reductions that streamline the analysis.  Our results include 
diagonal asymptotics across entire classes of functions, for example 
the general 3-variable case and the Gillis-Reznick-Zeilberger (GRZ) 
case, where the denominator in terms of elementary 
symmetric functions is $1 - e_1 + c e_d$ in any number $d$ of variables.  
The ACSV analysis also explains a discontinuous drop in exponential 
growth rate for the GRZ class at the parameter value $c = (d-1)^{d-1}$, 
previously observed for $d=4$ only by separately computing diagonal 
recurrences for critical and noncritical values of $c$.  
 \end{abstract}

\section{Introduction}

We study the power series coefficients of rational functions 
of the form $F(x_1, \dots , x_d) = 1/Q(x_1, \ldots , x_d)$ 
where $Q$ is a symmetric multilinear function with $Q(\zero) \neq 0$.  
Let 
$$ F(\xx) = \frac{1}{Q(\xx)} = \sum_{\rr\in\mathbb{Z}^d} a_\rr \xx^\rr, $$ 
converging in some polydisk $\cD \subset \mathbb{C}^d$.  Often one focuses 
on the diagonal coefficients $\delta_n := a_{n, \ldots , n}$, whose 
univariate generating function $\diag_F (z) := \sum_n \delta_n z^n$ 
satisfies a linear differential equation with polynomial coefficients, 
but may be transcendental.  A number of questions are natural, including 
nonnegativity (are all coefficients nonnegative), eventual nonnegativity 
(all but finitely many coefficients nonnegative), diagonal extraction 
(computing $\diag_F$ from $Q$), diagonal asymptotics, multivariate 
asymptotics and phase transitions in the asymptotics of $\{ a_\rr \}$. 

The positivity (nonnegativity) question is the most classical, dating back 
at least to Szeg{\H o}'s work in~\cite{szego-pos33}.  The techniques, some 
of which are indicated in the next section, used in the literature are diverse
and include integral methods and special functions, positivity preserving
operators, combinatorial identities, computer algebra such as cylindrical
algebraic decomposition, or determinantal methods.
Contrasting to these methods are analytic combinatorial several-variable 
methods (ACSV) as developed in~\cite{PW-book}. These are typically 
asymptotic, rather than exact, and therefore less useful for proving 
classical positivity statements, though they can be used to disprove them.  
Their chief advantages are their broad applicability and, increasingly, 
the level to which they have been automated. Our aim in this paper is 
to apply ACSV methods to a number of previously studied families of 
rational coefficient sequences, thereby extending what is known as well 
as illuminating the relative advantages of each method.

\subsection{Previously studied instances}

Let $\Md$ denote the class of symmetric functions of $d$ variables that are 
multilinear (degree~1 in each variable).  This class of generating functions 
$F(\xx) := 1/Q(\xx)$ where $Q \in \Md$ includes a great number of previously 
studied cases, some of which we now review.  Here and in the following, 
we use $d$ for the 
number of variables and boldface $\xx, \yy, \zz$, etc., for vectors of 
length $d$ of integer, real or complex numbers.  When $d$ is small we use 
$x, y, z, w$ for $x_1, x_2, x_3, x_4$. Let $e_k = e_{k,d}$ denote the $k^{th}$ 
elementary symmetric function of $d$ variables, the sum of all distinct $k$ element
products from the set of $d$ variables.  An equivalent description of 
the class $\Md$ is that it contains all linear combinations of 
$\{ e_{k,d} : 0 \leq k \leq d \}$.

The Askey-Gasper rational function is
\begin{equation} \label{eq:askey-gasper}
A(x,y,z) := \frac{1}{1 - x - y - z + 4xyz},
\end{equation}
which, in the previous notation, is $A(\xx) = F(\xx)$ when $d=3$ 
and $Q = 1 - e_1 + 4 e_3$.
Gillis, Reznick and Zeilberger \cite{zb-pos-el83} deduce positivity of 
$A$ from positivity of a $4$-variate extension due to 
Koornwinder~\cite{koornwinder-pos78}, for which they give a short 
elementary proof using a positivity preserving operation.  Gillis, 
Reznick and Zeilberger also provide an elementary proof of the 
stronger result by Askey and Gasper \cite{askey-pos77} that 
$A^\beta$ is positive for $\beta \geq ( \sqrt{17} - 3 ) / 2 \approx 0.56$, 
by deriving a recurrence relation for the coefficients that makes positivity 
apparent.

Specific functions in $\M_4$ that have shown up in the literature include 
the Szeg{\H o} rational function
\begin{equation} \label{eq:szego}
S(x,y,z,w) := \frac{1}{e_3(1-x,1-y,1-z,1-w)}
\end{equation}
as well as the Lewy-Askey function
\begin{equation} \label{eq:lewy}
L(x,y,z,w) := \frac{1}{e_2(1-x,1-y,1-z,1-w)},
\end{equation}
which is a rescaled version of $1/Q(\xx)$ with $d=4$ and 
$Q = 1 - e_1 + \frac23 e_2$.
Szeg{\H o} \cite{szego-pos33} proved that \eqref{eq:szego} is positive. 
In fact, he showed that $e_{d - 1, d}^{- \beta} (1 - \xx)$ is nonnegative 
if $\beta \geq 1 / 2$.  His proof relates the power series coefficients to 
integrals of products of Bessel functions and, among other ingredients, 
employs the Gegenbauer--Sonine addition theorem.  Scott and 
Sokal~\cite{ss-pos13} establish a vast and powerful generalization of 
this result by showing that, if $T_G$ is the spanning-tree polynomial 
of a connected series-parallel graph, then $T_G^{- \beta} (1 - \xx)$ 
is nonnegative if $\beta \geq 1 / 2$.  In the simplest non-trivial case, 
if $G$ is a $d$-cycle, then $T_G = e_{d - 1, d}$, thus recovering 
Szeg{\H o}'s result.  Relaxing the condition on $\beta$, Scott and 
Sokal further extend their results to spanning-tree polynomials of 
general connected graphs.  They do so by realizing that Kirchhoff's 
matrix-tree theorem implies that these polynomials can be expressed 
as determinants, and by proving that determinants of this kind are 
nonnegative.  As another consequence of this determinantal nonnegativity, 
Scott and Sokal conclude that~\eqref{eq:lewy} is nonnegative, 
thus answering a question originating with Lewy~\cite{askey-pos72} 
(with positivity replaced by nonnegativity).
Kauers and Zeilberger~\cite{kz-pos08} show that positivity of the 
Lewy-Askey rational function~\eqref{eq:lewy} would follow from 
positivity of the four variable function
\begin{equation} \label{eq:kz}
K(x,y,z,w) := \frac{1}{1 - e_1 + 2 e_3 + 4 e_4}.
\end{equation}
However, the conjectured positivity (or even nonnegativity) 
of~\eqref{eq:kz} remains open.

As noted above, $e_{d - 1, d}^{- \beta} (1 - \xx)$ is nonnegative if $\beta
\geq 1 / 2$.  The asymptotics of $e_{k,d}^{-\beta}(1 - \xx)$ are computed
in~\cite{BP-cones} for $(k,d)=(2,3)$.  In the cone $2(rs + rt + st) > 
r^2 + s^2 + t^2$, the coefficient $a_{r,s,t}$ is asymptotically positive 
when $\beta > 1/2 = (d-k)/2$ and not when $\beta < 1/2$.  
A conjecture of Scott and Sokal that remains open in both directions 
is that, for general $k$ and $d$, the condition $\beta \geq (d-k)/2$ 
is necessary and sufficient for nonnegativity of the coefficients of 
$e_{k,d}^{-\beta}(1 - \xx)$. 

Gillis, Reznick and Zeilberger~\cite{zb-pos-el83} consider the family
\begin{equation} \label{eq:GRZ}
F_{c,d} (x_1 , \ldots , x_d) := \frac{1}{1 - e_1 + c \, e_d} 
\end{equation}
of rational functions, where $c$ is a real parameter.
When $c < 0$, the coefficients are trivially positive, therefore it is usual 
to assume $c > 0$.  Gillis, Reznick and Zeilberger show that $F_{c,3}$ has 
nonnegative coefficients if $c \leq 4$ (and this condition is shown to be 
necessary in \cite{straub-pos}), but they conjecture that the threshold 
for $d \geq 4$ has a different form, namely that $F_{c,d}$ has 
nonnegative coefficients if and only if $c \leq d!$.  
It is claimed in~\cite{zb-pos-el83}, but the proof is omitted 
due to its length, that nonnegativity of $F_{d!, d}$ is 
implied by nonnegativity of the diagonal power series coefficients. 
In the cases $d = 4, 5, 6$, Kauers~\cite{kauers-pos07} proved 
nonnegativity of these diagonal coefficients by applying cylindrical 
algebraic decomposition (CAD) to the respective recurrences.  On the 
other hand, it is suggested in~\cite{sz-pos} that the diagonal 
coefficients are eventually positive if $c < (d - 1)^{d - 1}$. 

\subsection{Previous questions and results on diagonals}

The diagonal generating function $\diag_F$ and the sequence 
$\delta_n := a_{n, \ldots , n}$ it generates have received 
special attention.  One reason is that the question of multivariate 
asymptotics in the diagonal direction is simply stated, whereas the 
question of asymptotics in all possible directions requires discussion 
of different possible phase regimes, a notion of uniformity over 
directions, degeneracies when the coordinates are not of comparable 
magnitudes, and so forth.  Another reason is that there are effective 
methods for determining $\diag_F$ from $Q$, transferring the problem 
to the familiar univariate realm.

We briefly recall the theory of diagonal extraction. A $d$-variate power 
series $F$ is said to be D-finite if the formal derivatives 
$\{ \partial_\rr F : \rr \in (\Z^+)^d \}$ form a finite 
dimensional vector space over $\C [\xx]$.  In one variable, this is 
equivalent to $F$ satisfying a linear differential equation with 
polynomial coefficients,
$$\sum_{i=0}^k q_i (z) \frac{d^i}{dz^i} F = 0, \quad q_i \in \mathbb{C}[z] .$$

\begin{prop}[D-finite closure under diagonals~\cite{lipshitz-diagonal}]
\label{pr:lipshitz}
Let $F(\xx)$ be a D-finite power series.  Then $\diag (z) := \sum_n 
\delta_n z^n$ is D-finite, where $\delta_n := a_{n, \ldots , n}$. 
\end{prop}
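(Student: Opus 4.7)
The plan is to proceed by induction on the number of variables $d$, reducing the full diagonal to iterated \emph{partial diagonals} that identify two variables at a time. Given a D-finite $F(x_1, \ldots, x_d)$, define
\begin{equation*}
G(y, x_3, \ldots, x_d) := \sum_{r_1, r_3, \ldots, r_d} a_{r_1, r_1, r_3, \ldots, r_d}\, y^{r_1} x_3^{r_3} \cdots x_d^{r_d}.
\end{equation*}
If the operation $F \mapsto G$ preserves D-finiteness in the remaining $d-1$ variables, then $d-1$ successive applications reduce $\diag_F$ to a univariate D-finite series, and we are done.

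The fundamental tool is the Cauchy residue representation
\begin{equation*}
G(y, x_3, \ldots, x_d) = \frac{1}{2\pi i} \oint F\bigl(s, y/s, x_3, \ldots, x_d\bigr) \frac{ds}{s},
\end{equation*}
valid on a small enough circle in $s$ contained in the domain of convergence. This decomposes the key step into two closure properties: \textbf{(a)} the algebraic substitution $(x_1, x_2) \mapsto (s, y/s)$ preserves D-finiteness, so the integrand $H(s, y, x_3, \ldots, x_d) := F(s, y/s, x_3, \ldots, x_d)/s$ is D-finite in $d$ variables; and \textbf{(b)} integration along a closed cycle in $s$ sends a D-finite $H$ to a D-finite function of $(y, x_3, \ldots, x_d)$. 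Step~(a) is a routine chain-rule calculation: the finite $\C(\xx)$-span of derivatives of $F$ pulls back to a finite $\C(s, y, x_3, \ldots, x_d)$-span of derivatives of $H$ after clearing the $s$-denominators that appear.

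The main obstacle is Step~(b). I would handle it via the theory of holonomic $\mathcal{D}$-modules: promote D-finiteness of $H$ to holonomicity of the associated $\mathcal{D}$-module, and then invoke that holonomicity is preserved under direct image along the projection that forgets $s$, which is essentially Kashiwara's theorem. A more elementary alternative, closer to Lipshitz's original argument, is to differentiate $G$ under the integral sign in $y, x_3, \ldots, x_d$, use the annihilating relations for $H$ to keep all resulting integrands inside a fixed finite-dimensional $\C(s, y, x_3, \ldots, x_d)$-module, and absorb any stray $\partial_s$ contributions using the vanishing identity $\oint \partial_s(\cdot)\, ds = 0$ after partial-fractioning in $s$. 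A linear dependence among sufficiently many derivatives then yields an annihilating PDE system for $G$, and the final univariate stage produces the ODE required by the definition of D-finiteness. The delicate points are controlling the location of poles in $s$ so the contour can be deformed consistently under differentiation, and ensuring the linear relations produced are non-trivial.
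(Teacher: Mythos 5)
The paper itself gives no proof of this proposition: it is imported verbatim from Lipshitz's paper, and the only hint in the surrounding text is the remark that ``the complex integration can be viewed as purely formal.'' Your outline is, in substance, the standard proof from that cited source: reduce the full diagonal to iterated partial diagonals identifying two variables at a time, represent the partial diagonal as the residue $\frac{1}{2\pi i}\oint F(s,y/s,x_3,\dots,x_d)\,\frac{ds}{s}$, and then combine closure under the (algebraic) substitution with closure under constant-term extraction in $s$. Your step (b) is indeed where all the content lies, and both routes you name are legitimate: the elementary one is essentially Lipshitz's own argument (a dimension count over the rational function field showing that enough derivatives of $G$ lie in a fixed finite-dimensional space, with $\partial_s$-exact terms discarded), while the $\mathcal{D}$-module route needs not only preservation of holonomicity under direct images but also Kashiwara's nontrivial equivalence between D-finiteness and holonomicity in both directions, so it is heavier than your phrasing suggests. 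The one genuine divergence from Lipshitz worth flagging is that you set the argument up analytically, with contours, convergence domains and pole locations to control under differentiation; Lipshitz's key observation --- the very point the paper highlights --- is that the residue is just the constant term of a formal Laurent series in $s$, so the whole computation can be done algebraically and those analytic worries disappear. As a blind plan your proposal is correct in structure, but the substantive work (finiteness of the relevant module and nontriviality of the resulting linear relations) is exactly what you defer, so treat it as a roadmap to Lipshitz's proof rather than a self-contained one.
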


When $F$ is a rational function and $d=2$, it was known that $\diag$ 
is algebraic (and thus D-finite) at least by the late 
1960's~\cite{Furstenberg1967,hautus-klarner-diagonal}, 
and in special cases by P{\'o}lya in the 1920's~\cite{Polya1921}.   
In the rational function $F(x,y) = P(x,y)/Q(x,y)$ one substitutes 
$y=1/x$ and computes a residue integral to extract the constant coefficient.  
The basis for Lipshitz' proof was the realization that the complex 
integration can be viewed as purely formal.  With the advent of 
computer algebra this formal D-module computation was automated, 
with an early package in Macaulay and more widely used modern 
implementations in Magma, Mathematica and Maple.  Due to advances 
in software and processor speed, these computations are often completable 
on functions arising in applications. Christol~\cite{Christol1984} was the first to show
that diagonals of \emph{rational} functions are D-finite.

The following relationship between D-finiteness of a univariate function 
and the existence of a polynomial recursion satisfied by its coefficient 
sequence is the result of translating a formal differential equation 
into a relation among the coefficients.
\begin{prop} \label{pr:P-recursion}
The series $f(z) = \sum_{n \geq 0} a_n z^n$ is D-finite if and only 
if it is polynomially recursive, meaning that there is a $k > 0$ and 
there are polynomials $p_0, \ldots, p_k$, not all zero, such that for 
all but finitely many $n$, 
$$\sum_{i=0}^k p_i (n) f(n+i) = 0 \, .$$
\end{prop}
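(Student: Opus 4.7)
The plan is to establish both directions by directly translating between linear differential operators acting on $f(z)$ and linear recurrence operators acting on $(a_n)$. The two key identities I will use are (a) $p(z \, d/dz) f = \sum_n p(n) a_n z^n$ for any polynomial $p$, which follows from $(z\,d/dz)^j f = \sum_n n^j a_n z^n$, and (b) multiplying $f$ by $z^j$ shifts the index of $a_n$ by $-j$. Together these yield a dictionary between operators of the form $z^j \partial^i$ acting on $f$ and operators of the form $p(n) E^\ell$ acting on the coefficient sequence, where $E$ denotes the forward shift $a_n \mapsto a_{n+1}$.

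For the forward direction, starting from $\sum_i q_i(z) f^{(i)}(z) = 0$ with $q_i(z) = \sum_j c_{i,j} z^j \in \C[z]$ not all zero, I will expand $\partial^i f = \sum_n (n+i)(n+i-1)\cdots(n+1) a_{n+i} z^n$ and compute the coefficient of $z^m$ in each $z^j \partial^i f$, which is a polynomial in $m$ times $a_{m-j+i}$. Setting the coefficient of $z^m$ in the whole sum to zero and grouping terms by common shift $\ell = i-j$ produces a recurrence $\sum_\ell p_\ell(m) a_{m+\ell} = 0$ valid for all but finitely many $m$. The subtle point is showing the recurrence is nontrivial, which reduces to the standard fact that the map from the first Weyl algebra $\C\langle z, \partial\rangle$ to the Ore algebra of shift operators is injective.

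For the backward direction, given a recurrence $\sum_{i=0}^k p_i(n) a_{n+i} = 0$ valid for $n \geq N$ with the $p_i$ not all zero, I will construct an annihilating operator explicitly. Writing $\theta = z\, d/dz$, a direct computation shows that the coefficient of $z^{n+k}$ in $z^{k-i} p_i(\theta - i) f(z)$ is exactly $p_i(n) a_{n+i}$, so the operator $L := \sum_{i=0}^k z^{k-i} p_i(\theta - i)$ satisfies $Lf \in \C[z]$, since only the finitely many indices $n < N$ contribute. Then $\partial^M L$ annihilates $f$ for $M$ large enough, and since $L$ is nonzero in the Weyl algebra (by the same injective correspondence as before) and the Weyl algebra is an integral domain, $\partial^M L$ is a nonzero linear differential operator with polynomial coefficients that vanishes on $f$. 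I expect the main obstacle to be purely bookkeeping: tracking the index shifts cleanly and verifying nonvanishing of the constructed operators at each end.
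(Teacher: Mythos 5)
Your proof is correct and is exactly the argument the paper has in mind: the paper states this proposition without proof, describing it only as ``the result of translating a formal differential equation into a relation among the coefficients,'' and your two directions (extracting the coefficient of $z^m$ from $\sum_i q_i(z)f^{(i)}=0$, and conversely building $L=\sum_i z^{k-i}p_i(\theta-i)$ with $\theta = z\,d/dz$ and killing the polynomial remainder with a power of $\partial$) are precisely that standard translation, with the nontriviality checks handled correctly.
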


Let $f$ be a D-finite power series in one variable.  If $f$ has positive 
finite radius of convergence and integer coefficients, then it is a 
so-called {\em G-function} and has well behaved asymptotics according 
to following result.
\begin{prop}[Asymptotics of G-Function Coefficients] \label{pr:G}
Suppose $f$ is D-finite with finite radius of convergence and integer 
coefficients annihilated by a minimal order linear differential operator 
$\mathcal{L}$ with polynomial coefficients.  Then $\mathcal{L}$ has only 
regular singular points in the Frobenius sense.  Consequently, 
the coefficients $\{ a_n \}$ are given asymptotically by a formula 
\begin{equation} \label{eq:regular}
a_n \sim \sum_\alpha C_\alpha n^{\beta_\alpha} \rho_\alpha^{-n} 
   (\log n)^{k_\alpha}
\end{equation}
where the sum is over quadruples $(C_\alpha, b_\alpha, \rho_\alpha, k_\alpha)$ 
as $\alpha$ ranges over a finite set $A$ with the following properties.  
The base $\rho_\alpha$ is an algebraic number, a root of the leading 
polynomial coefficient of $\mathcal{L}$.  The $\beta_\alpha$ are 
rational and for each value of $\rho_\alpha$ can be determined as roots 
of an explicit polynomial constructed from $\rho_\alpha$ and $\mathcal{L}$.  
The log powers $k_\alpha$ are nonnegative integers, zero unless for fixed 
$\rho_\alpha$ there exist two values of $\beta_\alpha$ differing by an 
integer (including multiplicities in the construction of $\beta_\alpha$).  
The $C_\alpha$ are not in general closed form analytic expressions, but 
may be determined rigorously to any desired accuracy. 
\end{prop}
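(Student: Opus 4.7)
The plan is to combine the Chudnovsky--André theorem on G-functions with classical Frobenius theory and singularity analysis. The hypotheses---integer coefficients, positive finite radius of convergence, D-finiteness---are exactly those defining a G-function in the sense of Siegel. The first (and deepest) step is to invoke André's theorem that every G-function is annihilated by a \emph{Fuchsian} operator (one with only regular singular points on $\mathbb{P}^1$). Since $\mathcal{L}$ is of minimal order, any annihilating operator right-factors through $\mathcal{L}$ in the Weyl algebra, and right factors of Fuchsian operators are Fuchsian; hence $\mathcal{L}$ itself has only regular singular points. In particular, all finite singularities of $\mathcal{L}$ are roots of the leading polynomial $q_k(z)$, establishing the claim about regular singularities.

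Next I would identify the candidate bases $\rho_\alpha$. Because $f$ is analytic wherever $\mathcal{L}$ is regular, every singularity of $f$ on the boundary of its disk of convergence must be a finite singular point of $\mathcal{L}$, hence a root of $q_k$; so each $\rho_\alpha$ is algebraic. Only those singularities of minimum modulus contribute to the leading order of $a_n$, the rest giving exponentially smaller contributions that can be absorbed into error estimates.

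For the local behavior I would apply the Frobenius method at each dominant singular point $\rho$. The indicial polynomial at $\rho$ is an explicit polynomial in $\beta$ computed from the leading terms of the $q_i$ relative to $\rho$, and its roots are the candidate exponents $\beta_\alpha$. A local basis of $\ker \mathcal{L}$ consists of functions of the form $(z-\rho)^{\beta_j} h_j(z-\rho)$ with $h_j$ analytic, extended by logarithmic factors $\log^{k_j}(z-\rho)$ precisely when two indicial roots differ by a nonnegative integer (the resonant case forcing logs into the Frobenius resolution). The function $f$ expands as a $\C$-linear combination of this basis; the connection coefficients $C_\alpha$ are determined by analytically continuing the Taylor series of $f$ at the origin out to $\rho$, and in general admit no closed form, though they can be computed to arbitrary accuracy via numerical analytic continuation along a chosen path.

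Finally I would invoke the Flajolet--Sedgewick transfer theorems to convert each local expansion into a coefficient contribution: a local term $(1-z/\rho)^{-\beta}\log^k(1/(1-z/\rho))$ contributes $\rho^{-n} n^{\beta-1} (\log n)^k / \Gamma(\beta)$ to $a_n$ up to lower-order corrections, and one sums over all dominant singularities, reindexing $\beta-1\mapsto\beta$ to match~\eqref{eq:regular}. The main obstacle is the first step: André's regular-singularity theorem for G-functions is genuinely deep and cannot be reproved here, so the argument must cite it as a black box. Everything downstream is a mechanical assembly of classical Frobenius theory, elementary estimates bounding the subdominant singularities, and the transfer theorems standard in analytic combinatorics.
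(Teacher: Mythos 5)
Your route is essentially the one behind the paper's citation-only proof: the paper does not argue anything itself, it points to the discussion in Melczer's thesis and to Flajolet--Sedgewick, and the content of those references is exactly your assembly --- the Andr\'e--Chudnovsky--Katz theorem for the structure of the minimal operator, Frobenius local analysis at the roots of the leading coefficient, and transfer/singularity analysis to pass from local expansions to coefficient asymptotics. Your minimality argument (right factors of Fuchsian operators are Fuchsian, since their solutions are among the solutions of the full operator and hence of moderate growth) is correct, though the standard statement of the theorem already applies directly to the minimal operator.

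There is, however, one concrete gap: the proposition asserts that the exponents $\beta_\alpha$ are \emph{rational}, and your argument does not deliver this. Frobenius theory only says the exponents are roots of the indicial polynomial at $\rho_\alpha$, hence algebraic numbers; nothing in ``$\mathcal{L}$ is Fuchsian'' forces them to be rational. Rationality is part of the same deep package you are citing, but in its stronger form: by Chudnovsky the minimal operator of a G-function is a G-operator (equivalently, by Andr\'e, globally nilpotent), and Katz's theorem then gives not only regular singularities but also \emph{rational} exponents at every singular point. So you should cite the full Andr\'e--Chudnovsky--Katz statement rather than only ``annihilated by a Fuchsian operator''; with that replacement the rest of your outline (dominant singularities among the zeros of the leading coefficient, logarithms only in the resonant case, transfer theorems on a suitable $\Delta$-domain, and rigorous numerical analytic continuation for the connection constants $C_\alpha$) goes through as you describe.
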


\begin{proof} 
The discussion in~\cite[page~37]{melczer-phd} 
gives references to several published results that together establish
this proposition; see also Flajolet and 
Sedgewick~\cite[Section VII. 9]{flajolet-sedgewick-anacomb}.  
Determination of all rational and algebraic numbers 
other than $C_\alpha$ is known to be effective.
\end{proof}

Because there are computational methods for the study of diagonals, 
it is of interest to reduce positivity questions to those involving
only diagonals.  For the Gillis-Reznick-Zeilberger class $F_{c,d}$, 
such a result is conjectured.

\begin{conj}[\cite{zb-pos-el83}] \label{conj:diagonal}
For $d \geq 4$, the following three statements are equivalent.
\begin{enumerate}[(i)]
\item $c \leq d!$
\item The diagonal coefficients of $F_{c,d}$ are nonnegative
\item All coefficients of $F_{c,d}$ are nonnegative
\end{enumerate}
\end{conj}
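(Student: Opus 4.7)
The plan starts with the two cheap directions. Implication (iii) $\Rightarrow$ (ii) is immediate. For (ii) $\Rightarrow$ (i) I would expand $F_{c,d} = \sum_{k \ge 0}(e_1 - c\, e_d)^k$ and extract the coefficient of $x_1 \cdots x_d$: the only contributions come from $k=d$ (each $e_1$ factor picking a distinct variable, contributing $d!$) and from $k=1$ (the single factor $-c\, e_d$), giving $\delta_1 = d! - c$. Hence as soon as $c > d!$ the very first diagonal coefficient is negative, contradicting (ii).

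The remaining implication (i) $\Rightarrow$ (iii) I would handle in two stages, the first being a monotonicity reduction to the single threshold value $c = d!$. Starting from the elementary identity
\[
F_{c,d} - F_{d!,d} = (d! - c)\, e_d\, F_{c,d}\, F_{d!,d},
\]
extracting the coefficient of $\xx^\rr$ with $\rr \ge \one$ gives the recursion
\[
a_\rr(c) = a_\rr(d!) + (d! - c) \sum_{\zero \le \mathbf{s} \le \rr - \one} a_{\mathbf{s}}(c)\, a_{\rr - \one - \mathbf{s}}(d!),
\]
while for $\rr$ with some $r_i = 0$ one simply has $a_\rr(c) = a_\rr(0)$, a positive multinomial coefficient independent of $c$. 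Under the hypothesis $c \le d!$, an induction on $|\rr|$ propagates nonnegativity of the family $\{ a_\rr(d!) \}$ to nonnegativity of $\{ a_\rr(c) \}$ throughout $c \le d!$, reducing (iii) to the single case $c=d!$.

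The second stage is to invoke the claim of Gillis, Reznick and Zeilberger~\cite{zb-pos-el83} that nonnegativity of the full coefficient array of $F_{d!,d}$ follows from nonnegativity of its diagonal; their argument is stated but omitted there due to length, so my first concrete task would be to reconstruct and verify it. Together with the monotonicity step this closes the cycle of equivalences, provided one can establish (ii) at the threshold $c=d!$.

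The hard part is exactly this last point: proving nonnegativity of $\delta_n$ for $F_{d!,d}$ for all $n$ and all $d \ge 4$. It is known only for $d \in \{4,5,6\}$, where Kauers~\cite{kauers-pos07} applies cylindrical algebraic decomposition to the D-finite recurrence from Proposition~\ref{pr:P-recursion}. The ACSV machinery of this paper can be brought in: because $d! < (d-1)^{d-1}$ for $d \ge 4$, the denominator $1 - e_1 + d!\, e_d$ has a smooth symmetric critical point on the positive real diagonal, and smooth-point ACSV would yield an explicit asymptotic $\delta_n \sim C(d!)\, n^{-(d-1)/2}\, \rho^{-n}$. Certifying $C(d!) > 0$ would secure eventual positivity of $\delta_n$, but converting that into \emph{genuine} nonnegativity for all $n$, uniformly in $d$, requires ruling out negative initial terms by effective bounds that appear to lie beyond current techniques. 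A complete proof therefore seems to demand either a manifestly positive combinatorial identity for $\delta_n$ at $c = d!$, or a quantitative refinement of ACSV providing explicit error estimates tight enough to close the gap between the asymptotic regime and the finitely many small $n$ for each $d$.
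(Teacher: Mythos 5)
Be aware of what you are being asked to prove: the statement labelled \ref{conj:diagonal} is a \emph{conjecture} (attributed to \cite{zb-pos-el83}), and the paper offers no proof of it. Its only commentary is the remark that (iii) $\Rightarrow$ (ii) $\Rightarrow$ (i) is trivial (``look at $\delta_1$''), that nonnegativity holds on an interval $c \in [0, c_{\max}]$ so the conjecture reduces to the single value $c = d!$, and that the implication (ii) $\Rightarrow$ (iii) at $c = d!$ is claimed in \cite{zb-pos-el83} with the proof omitted. Your proposal reproduces exactly these observations and makes them concrete: the computation $\delta_1 = d! - c$ is correct and is precisely the paper's ``look at $\delta_1$''; the identity $F_{c,d} - F_{d!,d} = (d!-c)\, e_d\, F_{c,d} F_{d!,d}$ is correct, and the resulting induction on $|\rr|$ (using that coefficients with some $r_i = 0$ are $c$-independent multinomial coefficients, and that the convolution only involves indices of strictly smaller weight) is a clean proof of the reduction to the threshold case $c = d!$ that the paper merely asserts. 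So the parts of your argument that are proofs are correct, and they align with the paper's remarks rather than with any proof the paper contains.

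The remaining content is genuinely open, and you identify it accurately rather than papering over it: (a) the implication (ii) $\Rightarrow$ (iii) at $c = d!$ rests on an unpublished claim of \cite{zb-pos-el83} that would have to be reconstructed; (b) nonnegativity of the diagonal of $F_{d!,d}$ is known only for $d = 4,5,6$ via \cite{kauers-pos07}. Your remark about ACSV is also consistent with the paper: since $d! < c_* = (d-1)^{d-1}$ for $d \geq 4$, Theorems~\ref{th:asym pos} and~\ref{th:asm Fcd} give only \emph{eventual} positivity of $\delta_n$ (and of $a_\rr$ in a conical neighborhood of the diagonal), and the phase transition the paper locates is at $c_*$, not at $d!$; no quantitative error bounds are provided that could convert this into nonnegativity for all $n$ uniformly in $d$. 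In short, your submission is a correct reduction of the conjecture to its known hard core together with an honest account of why that core is out of reach of the paper's methods; it is not, and does not claim to be, a proof of the conjecture, which remains open in the paper as well.
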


To be precise, $(iii) \Rightarrow (ii) \Rightarrow (i)$ is trivial
(look at $\delta_1$); nonnegativity of all coefficients of $F_{c,d}$ 
holds for some interval $c \in [0 , c_{\max}]$, therefore the conjecture
comes down to nonnegativity of $F_{d! , d}$. A proof for $(ii) 
\Rightarrow (iii)$ in the case $c = d!$ is claimed in~\cite{zb-pos-el83} 
but omitted from the paper due to length.
This question is generalized in~\cite{sz-pos} to all of $\Md$.

\begin{question}[\protect{\cite[Question 1.1 and following]{sz-pos}}]
For $Q \in \Md$ and $F = 1/Q$, under what conditions does
nonnegativity of the coefficients of $\diag_F$ imply nonnegativity
of all coefficients of $F$?
\end{question}

More specifically, with nonnegativity in place of positivity, the authors 
of that paper wonder whether positivity of $F$ is equivalent to positivity 
of $\diag_F$ together with positivity of $F(x_1, \ldots , x_{d-1} , 0)$.
They prove that this is true for $d=2$ and, with additional evidence, 
conjecture this to be true for $d=3$ as well.  Combined 
with~\cite[Conjecture~1]{straub-pos} and \cite[Conjecture 3.3]{sz-pos}, 
we obtain the following explicit predictions on the diagonal coefficients.

\begin{conj} \label{conj:M3}
Let $F = 1/Q$ where $Q = 1 - e_1 + a e_2 + b e_3$, which is, up to rescaling,
the general element of $\M_3$.
Then $\diag_F$ is nonnegative if and only if
\begin{equation}
b \leq \begin{cases}
   6(1-a) & a \leq a_0 \\ 2-3a+2(1-a)^{3/2} & a_0 \leq a \leq 1 \\ 
   -a^3 & a \geq 1, \end{cases}
\end{equation}
where $a_0 \approx -1.81$ is characterized by $6(1-a_0) = 2-3a_0+2(1-a_0)^{3/2}$.
\end{conj}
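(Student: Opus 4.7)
The plan is to apply the ACSV apparatus to the symmetric variety $\sing = \{Q = 0\}$ and reduce everything to the univariate cubic $P(t) := Q(t,t,t) = 1 - 3t + 3at^2 + bt^3$. Symmetry forces the smooth-point critical equations $xQ_x = yQ_y = zQ_z$ to hold automatically along the diagonal $\{x=y=z=t\}$, so each positive real root $t_0$ of $P$ gives a candidate symmetric critical point $(t_0,t_0,t_0)\in\sing$ for the direction $\rr = (1,1,1)$. A first-term check from direct expansion gives $\delta_1 = 6 - 6a - b$, so $b \leq 6(1-a)$ is necessary; this constraint is tight along the first boundary branch (for $a \leq a_0$), where the asymptotic constraints below are strictly weaker than this first-order obstruction.

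Applying the smooth-point ACSV formula at the symmetric critical point yields
\[
\delta_n \;\sim\; \frac{C(a,b)}{n}\, t_0^{-3n},
\]
with the $n^{-1}$ prefactor from $n^{-(d-1)/2}$ at $d=3$ and $C(a,b)$ an explicit algebraic expression in $(a,b,t_0)$ and the determinant of the $2 \times 2$ height-function Hessian; eventual nonnegativity forces $C(a,b) \geq 0$. I would then identify the remaining two boundaries as independent geometric degenerations. The curve $b = 2 - 3a + 2(1-a)^{3/2}$ coincides with the discriminant locus of $P$ (this can be checked via the substitution $a = 1-s^2$, $b = -1+3s^2+2s^3$); on this curve $P$ acquires a double root and, since $P'(t) = 3\, Q_x(t,t,t)$ by symmetry, $\sing$ itself becomes singular at the symmetric critical point, the Hessian degenerates, and a transition between minimal and non-minimal critical points occurs. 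The curve $b = -a^3$ (for $a \geq 1$) is where $P$ factors as $(1-at)\bigl(1 + (a-3)t + a^2 t^2\bigr)$ with the quadratic factor carrying complex conjugate roots of modulus $1/a$, equal to the modulus of the real root; two additional non-real critical points thus lie on the same minimal torus, and their oscillatory ACSV contributions balance the smooth contribution from the symmetric point. The three boundary pieces correspond respectively to the finite-order obstruction $\delta_1 \geq 0$, the discriminant/Hessian degeneration, and the resonance of complex-conjugate critical points on the minimal torus; the transition value $a_0$ is exactly where the first mechanism ceases to dominate the second.

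The main obstacle is the lift from eventual to absolute nonnegativity. ACSV delivers $\delta_n \sim C(a,b)n^{-1}t_0^{-3n}$ with a sign that is readable in the interior of each region, but does not by itself rule out isolated negative $\delta_n$ at small $n$. In the interior one would combine an effective error bound in the ACSV expansion --- producing an explicit threshold $N(a,b)$ past which $\delta_n > 0$ --- with a direct check for $n \leq N(a,b)$ via the polynomial recurrence for $\delta_n$ guaranteed by Propositions~\ref{pr:lipshitz} and~\ref{pr:P-recursion}; this is feasible on compact subregions of the interior but becomes delicate approaching the boundary, where $C(a,b) \to 0$ and $N(a,b) \to \infty$. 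The boundary cases themselves (double roots on the middle curve, coincident moduli on the third) require non-smooth ACSV together with coalescing-critical-point analysis, and this is the technical heart of a complete proof. The reverse direction is easier: outside the claimed region either $\delta_1 < 0$ or the leading asymptotic constant $C(a,b)$ is strictly negative, either way producing infinitely many negative diagonal coefficients.
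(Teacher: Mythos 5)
The statement you are trying to prove is not proved in the paper at all: it is stated as an open conjecture (imported from \cite{straub-pos} and \cite{sz-pos}), and what the paper actually establishes by ACSV is the weaker asymptotic statement, Theorem~\ref{th:diag M3}, about \emph{eventual} positivity of $\delta_n$ versus infinitely many sign changes. Your proposal is essentially a sketch of that asymptotic theorem, and you yourself flag as ``the main obstacle'' exactly the steps that separate it from the conjecture: passing from eventual positivity to nonnegativity of \emph{all} $\delta_n$, and handling the equality/boundary cases. These are not technicalities to be patched; they are the open content. Note in particular that the regions do not even coincide: for $a\leq -3$ the eventual-positivity boundary is $b=-9a$ (Theorem~\ref{th:diag M3}), while the conjectured nonnegativity boundary is $b=6(1-a)$, which is strictly smaller for $a<-2$. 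In the strip between them the ACSV constant is strictly positive and only finitely many coefficients (e.g.\ $\delta_1$) are negative, so no amount of asymptotic analysis, effective or not, can detect the conjectured boundary there; conversely your proposed interior strategy (effective error bounds plus a finite check up to $N(a,b)$) cannot close the ``if'' direction, since the parameter region is unbounded, $N(a,b)\to\infty$ at the boundary, and the boundary itself is included in the conjecture. On those boundary curves the geometry is degenerate --- a cone point on the discriminant branch $b=2-3a+2(1-a)^{3/2}$, and three equal-modulus critical points on $b=-a^3$ --- and the paper itself only \emph{conjectures} dominance of the degenerate positive real point for $a>1$; so even eventual positivity at equality is not settled, let alone full nonnegativity.

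Two further gaps in the asymptotic part of your sketch. First, a diagonal root of $P(t)=Q(t,t,t)$ gives only a \emph{candidate} critical point; to apply the smooth-point formula you need minimality, which in the paper comes from the Grace--Walsh--Szeg{\H o}/Borcea--Br{\"a}nd{\'e}n argument (Lemma~\ref{lem:GWS}), and you must also exclude non-diagonal critical points on the same minimal torus, which the paper does by a separate Gr{\"o}bner-basis lemma; without these your single-term formula $\delta_n\sim C(a,b)n^{-1}t_0^{-3n}$ is unjustified. Second, your ``only if'' direction as stated is off: when several minimal points of equal modulus contribute and none has positive real coordinate product, one concludes infinitely many sign changes via the Bell--Gerhold argument in Corollary~\ref{cor:asym pos}(iii), not via a ``strictly negative leading constant''; and in the strip described above the failure of nonnegativity comes from a single small-$n$ coefficient, not from infinitely many negative ones. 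In short, your proposal reconstructs the route to Theorem~\ref{th:diag M3} but does not prove the conjecture, and no proof of the conjecture exists in the paper to compare against.
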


\subsection{Present results}

In the present work we use ACSV to answer asymptotic versions of 
these questions.  Aside from computing special cases, the main 
new results are (1) simplification for diagonals with symmetric denominators
via the Grace-Walsh-Szeg{\H o} Theorem (Lemma~\ref{lem:GWS} below); 
(2) an easy further simplification for the Gillis-Reznick-Zeilberger 
class (Lemma~\ref{lem:Fcd} below); and (3) a topological computation 
to explain the drop in magnitude of coefficients at critical parameter values
(Theorem~\ref{th:lacuna} below).

The first special case we look at is the diagonal of the general 
element of $\M_3$, corresponding to Conjecture~\ref{conj:M3}.
\begin{theorem} \label{th:diag M3}
Let $Q = 1 - e_1 + a e_2 + b e_3$, let $F = 1/Q = \sum_\rr a_\rr \zz^{\rr}$
and let $\delta_n = a_{n, \dots , n}$ be the diagonal coefficients of $F$.
Then $\delta_n$ is eventually positive when 
\begin{equation}
b < \begin{cases}
   -9a & a \leq -3 \\ 2 - 3a + 2(1-a)^{3/2} & -3 \leq a \leq 1 \\ 
   - a^3 & a \geq 1 
   \end{cases}
\label{eq:M3cases}
\end{equation}
while, when the inequality is reversed, $\delta_n$ attains an infinite
number of positive and negative values.
\end{theorem}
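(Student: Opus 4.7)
The plan is to apply smooth-point ACSV in the diagonal direction, using the Grace--Walsh--Szeg{\H o} theorem (Lemma~\ref{lem:GWS}) to reduce the problem to a univariate computation on the main diagonal. Since $Q = 1 - e_1 + a e_2 + b e_3$ is multiaffine and symmetric, every root $t$ of the univariate polynomial
\[
q(t) := Q(t,t,t) = 1 - 3t + 3at^2 + bt^3
\]
produces a diagonal-direction critical point $(t,t,t)$ of $Q$ (the ACSV critical-point equations $z_i \partial_i Q = z_j \partial_j Q$ on $\{Q = 0\}$ hold automatically on the diagonal by symmetry), and Lemma~\ref{lem:GWS} ensures conversely that the smallest-modulus smooth critical point of $Q$ for the diagonal lies there. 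Consequently the leading-order asymptotics of $\delta_n$ are controlled entirely by the root structure of $q$.

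First I would establish eventual positivity strictly inside the region~\eqref{eq:M3cases}. There, $q$ has a simple positive real root $t_* > 0$ of strictly smallest modulus, so the smooth-point formula from~\cite{PW-book} gives
\[
\delta_n \,=\, \frac{C}{n}\, t_*^{-3n}\bigl(1 + O(n^{-1})\bigr)
\]
with an explicit positive constant $C = C(a,b)$ depending on $t_*$ and $q''(t_*)$, which forces $\delta_n > 0$ for large $n$. The three boundary curves in~\eqref{eq:M3cases} correspond precisely to the codimension-one ways $t_*$ can lose its ``uniquely minimal simple positive root'' status. For $-3 \leq a \leq 1$, the curve $b = 2-3a+2(1-a)^{3/2}$ is the discriminant locus where $q$ acquires a positive real double root. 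For $a \geq 1$, direct substitution shows that $b = -a^3$ is exactly where $q$ factors as $(1-at)\bigl(1+(a-3)t+a^2t^2\bigr)$, so a complex conjugate pair ties $t_* = 1/a$ in modulus. For $a \leq -3$, direct substitution shows that $b = -9a$ is exactly where $q$ factors as $(1 - 3t)(1 + 3at^2)$, so the negative real root $-1/\sqrt{-3a}$ meets or beats $t_* = 1/3$ in modulus. The three branches agree at the joining points $a = -3$ (both sides give $b = 27$) and $a = 1$ (both sides give $b = -1$).

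For the reverse direction --- that $\delta_n$ changes sign infinitely often when the inequality fails --- the strategy is to show that in each sub-case the asymptotic acquires an oscillating dominant term. When a complex conjugate pair $\zeta, \bar\zeta$ dominates or ties the real root, the leading contribution has the form
\[
\delta_n \,=\, \frac{1}{n}\Bigl(A\,\rho^{-3n} + 2\,\mathrm{Re}\bigl(B\,\zeta^{-3n}\bigr)\Bigr) + \text{lower order},
\]
and when the new dominant root is negative real the oscillation factor is simply $(-1)^n$. In either case a Kronecker--Weyl equidistribution argument shows that such a sequence cannot be eventually of one sign, producing infinitely many values of both signs.

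The main obstacle will be the final oscillation argument. One must verify that the leading constants $A, B$ produced by smooth-point ACSV are nonzero and of appropriate relative size so that the oscillating term is not dominated by lower-order corrections on a positive-density subset of $n$, and one must rule out accidental sign-preserving cancellations when $\arg(\zeta)/2\pi$ is a rational multiple of $2\pi$. Handling this cleanly across the three boundary branches --- and particularly near the transition points $a = -3$ and $a = 1$ where distinct boundary regimes meet --- will require case-by-case inspection of the explicit factorizations of $q$ derived above, but the explicit forms make this tractable.
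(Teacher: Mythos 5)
Your strategy coincides with the paper's: reduce to the main diagonal, extract the asymptotics of $\delta_n$ from the minimal-modulus roots of $q(t)=Q(t,t,t)$ via the smooth-point formula, and conclude eventual positivity when a positive real root strictly dominates versus infinitely many sign changes otherwise. The genuine gap is in the reduction. Lemma~\ref{lem:GWS} only yields that the diagonal point over a minimal-modulus root of $q$ is a \emph{minimal} point of $\sing$; it does not say that every point of $E=\TT(\bbeta)\cap\crit(1,1,1)$ on that torus is diagonal. Symmetry gives you that diagonal points of $\sing$ are critical, not that nondiagonal critical points are absent, and for general $Q\in\M_3$ they are not absent: $1/Q$ has nondiagonal critical points (permutations of $\left(\frac1a,\frac1a,\frac{a(1-a)}{a^2+b}\right)$, occurring when $b=a^2(a-2)$), and the ``all critical points are diagonal'' Lemma~\ref{lem:Fcd} holds only for the GRZ class, not for general $\M_3$. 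If such a point lay on the minimal torus it would contribute an extra, generally oscillating, term of the same order to~\eqref{eq:smooth}, which would invalidate both halves of your argument. The paper closes exactly this hole with the unlabeled lemma in Section~\ref{sec:M3}, proved by a Gr\"obner-basis computation, showing that every critical point on the torus $T(p,p,p)$ over a minimal-modulus root $p$ of $\delta^Q$ has the form $(q,q,q)$ with $\delta^Q(q)=0$; your proposal needs this step and currently mis-attributes it to Grace--Walsh--Szeg\H{o}.

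By contrast, the step you single out as the main obstacle is already covered by the machinery: Corollary~\ref{cor:asym pos}\eqref{item:3}, whose proof invokes Bell and Gerhold, shows that a not-identically-zero sum of cosine terms at order $1/n$ exceeds and falls below the $O(1/n)$ error term (bounded by a constant-coefficient linearly recurrent sequence) infinitely often, with no separate treatment of rational versus irrational arguments of $\zeta$. What you do still have to verify in order to invoke it is that, when $b$ exceeds the piecewise bound, no minimal-modulus root $z$ of $q$ has $z^3$ positive real (and that the leading sum is not identically zero), and, for the positivity half, that the positive real root is the strictly smallest-modulus root throughout the open region below the curve; this is the content of Proposition~\ref{pr:1} and the moduli comparisons behind it. Your boundary factorizations are correct (at $b=-9a$, $q=(1-3t)(1+3at^2)$; at $b=-a^3$, $q=(1-at)(1+(a-3)t+a^2t^2)$), though note that on the branch $b=-9a$ with $a<-3$ the modulus tie at the boundary is between the two real roots $\pm(-3a)^{-1/2}$, not between a new root and $1/3$.
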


Theorem~\ref{th:diag M3} is obtained by examining asymptotic regimes, 
captured in the following result.

\begin{theorem} \label{th:asm M3}
Let $Q , F,$ and $\delta_n$ be as in Theorem~\ref{th:diag M3}.
Assuming that $b$ is not equal to the piecewise function in 
Equation~\eqref{eq:M3cases},
\begin{equation} 
\delta_n = \sum_{x \in E}\left(\frac{x^{-3n}}{n} \cdot 
   \left| \frac{1-2ax-bx^2}{1-ax} \right| \cdot 
   \frac{1}{2\sqrt{3}(1-2x+ax^2)} \right) 
   \left(1+O\left(\frac{1}{n}\right)\right), \label{eq:asm M3}
\end{equation}
where $E$ consists of the minimal modulus roots of the polynomial 
$Q(x,x,x)=1 - 3x + 3a x^2 + b x^3$.
\end{theorem}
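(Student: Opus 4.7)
My approach is a direct application of smooth-point ACSV to $F = 1/Q$ in the diagonal direction $\rr = (1,1,1)$, exploiting the symmetry of $Q$ to reduce the critical-point analysis to the univariate restriction $\tilde Q(x) := Q(x,x,x) = 1 - 3x + 3ax^2 + bx^3$, whose minimal-modulus roots comprise $E$. The critical-point equations $z_1 \partial_1 Q = z_2 \partial_2 Q = z_3 \partial_3 Q$ together with $Q(\zz) = 0$ admit the obvious symmetric solution family $\{(x,x,x) : \tilde Q(x) = 0\}$, and the first task is to show that the \emph{minimal} critical points (those on the boundary of the domain of convergence of $F$) are exactly $(x,x,x)$ for $x \in E$. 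This is where Lemma~\ref{lem:GWS} (the Grace--Walsh--Szeg\H{o}-based reduction for symmetric multilinear denominators) applies: it guarantees that the minimum modulus on $\{Q = 0\}$ is achieved on the diagonal, so no off-diagonal critical point can match the exponential order $|x|^{-3n}$ contributed by $E$.

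Next I verify smoothness and nondegeneracy. A direct calculation gives $\partial_{z_3} Q(x,x,x) = -(1 - 2ax - bx^2)$, and the hypothesis that $b$ avoids the piecewise function~\eqref{eq:M3cases} is precisely the condition that $1 - 2ax - bx^2$, $1 - ax$, and $1 - 2x + ax^2$ are all nonzero on $E$: the first ensures smoothness on $\{Q = 0\}$, while the other two ensure the stationary-phase Hessian is nondegenerate. The standard smooth-point formula of ACSV (\cite[Ch.~9]{PW-book}; see also \cite{melczer-phd}) then gives, for $d = 3$,
$$\delta_n \;=\; \sum_{x \in E} \frac{x^{-3n}}{2\pi n} \cdot \frac{-1}{x\, \partial_{z_3} Q(x,x,x)} \cdot \frac{1}{\sqrt{\det \hess(x)}} \bigl(1 + O(1/n)\bigr),$$
where $\hess(x)$ is the $2 \times 2$ Hessian of $-\log(z_1 z_2 g(z_1, z_2))$ at $(x,x)$ with $z_3 = g(z_1, z_2)$ the local solution of $Q = 0$. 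Using $\partial_{z_i} g = -\partial_{z_i} Q / \partial_{z_3} Q$ and the explicit second partials of $Q$, symmetry forces $\hess(x)$ to have equal diagonal entries and equal off-diagonal entries, so its determinant factors; routine algebra identifies the factors as rational combinations of $(1 - ax)$ and $(1 - 2x + ax^2)$ over powers of $(1 - 2ax - bx^2)$ and $x$, and substituting back produces~\eqref{eq:asm M3} after consolidation of signs into absolute values.

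The main obstacle is the minimality step --- ruling out non-diagonal critical points of $Q$ whose coordinates could match the modulus of $x \in E$, since any such point would contribute additional leading-order terms not accounted for in~\eqref{eq:asm M3}. Lemma~\ref{lem:GWS} is precisely what closes this gap, and without it the argument does not go through. Secondary subtleties are the treatment of complex-conjugate roots in $E$, whose oscillatory factors $x^{-3n}$ combine to give a real answer, and the verification that non-minimal critical points together with any contour-deformation residues around them contribute exponentially smaller terms that are absorbed into the relative $O(1/n)$ error. Once the minimality input from Lemma~\ref{lem:GWS} is in place, the remainder is a careful but routine smooth-point ACSV calculation.
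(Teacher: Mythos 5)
Your overall architecture matches the paper's: reduce to the diagonal via Lemma~\ref{lem:GWS}, then apply the smooth-point formula of Theorem~\ref{th:smooth} and compute the Hessian. However, there is a genuine gap at exactly the step you declare closed. Lemma~\ref{lem:GWS} only asserts that the diagonal point $(x,x,x)$, for $x$ a minimal-modulus root of $\fd$, is a \emph{minimal} critical point (the open polydisk below it misses $\sing$). It does \emph{not} assert that the diagonal points are the only critical points on the torus $\TT(|x|,|x|,|x|)$, and your inference that ``no off-diagonal critical point can match the exponential order $|x|^{-3n}$'' does not follow from it: an off-diagonal critical point with all coordinates of modulus $|x|$ would lie on the same torus, be minimal for the same reason the diagonal points are (minimality depends only on the coordinate moduli), and hence belong to the set $E=\TT(\bbeta)\cap\crit(\rhat)$ of Theorem~\ref{th:smooth}, contributing at the same exponential order. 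If such points existed, formula~\eqref{eq:asm M3}, which sums only over diagonal roots, would be missing leading-order terms. This is not a vacuous worry for general elements of $\M_3$: the paper proves a separate lemma, via a Gr\"obner basis computation, showing that non-diagonal critical points do exist (permutations of $\left(\tfrac1a,\tfrac1a,\tfrac{a(1-a)}{a^2+b}\right)$, occurring exactly when $b=a^2(a-2)$) and then argues case by case that they never lie on the torus $T(p,p,p)$ of a minimal-modulus root $p$ of $\fd$. Your proposal contains no substitute for this verification, and attributing it to the Grace--Walsh--Szeg\H{o} reduction is where the argument breaks. (Contrast with the GRZ class, where Lemma~\ref{lem:Fcd} gives the analogous exclusion by an easy gradient computation; no such shortcut is available for general $\M_3$.)

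A secondary overstatement: you claim that ``$b$ avoids the piecewise function~\eqref{eq:M3cases}'' is \emph{precisely} the condition that $1-2ax-bx^2$, $1-ax$ and $1-2x+ax^2$ are nonzero on $E$. Note that $\fd'(x)=-3(1-2ax-bx^2)$, so vanishing of the first factor at a root of $\fd$ means a double root, i.e.\ $p(a,b)=0$; but the discriminant curve coincides with the piecewise boundary only on the branch $r_1(a)$ for $-3\le a\le 1$, and conversely on the branch $b=-9a$, $a<-3$, of the piecewise boundary the minimal roots remain simple (the delicacy there is which of the two minimal points dominates, not degeneracy). So the needed implication (off the boundary $\Rightarrow$ nondegenerate minimal points) still requires a check of which roots are minimal on the various discriminant branches; this should be stated as a verification rather than an equivalence. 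The Hessian computation itself, and the handling of conjugate pairs, are fine in outline and match what the paper does.
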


The situation for eventual positivity on the diagonal when equality holds 
in Equation~\eqref{eq:M3cases} is more delicate.  When $a < -3$ 
it follows from seeing that there are two diagonal minimal points, $(r,r,r)$ 
and $(-r,-r,-r)$, with a greater constant at the positive point.
When $-3 < a < 1$, it follows from a dominant positive real cone point.
When $a = -3$ a quadratically degenerate smooth point at $(-1/3,-1/3,-1/3)$
may be shown via rigorous numerical diagonal extraction to dominate 
the cone point at $(1/3,1/3,1/3)$, leading to alternation.  
When $a = 1$, $a_\rr \equiv 1$.  Finally, when $a > 1$, there are 
three smooth points on the unit circle, with nonnegativity conjectured 
because the positive real point is degenerate and should dominate. 

Our second set of results concern the diagonal of the
general element of the GRZ rational function $F_{c,d}$.
Let 
\begin{equation} \label{eq:c_*}
c_* = c_* (d) := (d-1)^{d-1} \, .
\end{equation}
The following corresponds to Conjecture~\ref{conj:diagonal}.
\begin{theorem} \label{th:asym pos}
Let $d \geq 4$.  Then the diagonal coefficients of $F_{c,d}$ are 
eventually positive when $c < c_*$ and contain an infinite number
of positive and negative values when $c > c_*$. When $c<c_*$, 
there is a conical neighborhood $\nbd$ of the diagonal
such that $a_\rr > 0$ for all but finitely many $\rr \in \nbd$.
\end{theorem}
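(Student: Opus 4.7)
The plan is to apply the smooth-point ACSV machinery~\cite{PW-book} to $F_{c,d} = 1/Q$ with $Q = 1 - e_1 + c\,e_d$. By the Grace-Walsh-Szeg\H{o} reduction of Lemma~\ref{lem:GWS} together with the GRZ-specific simplification of Lemma~\ref{lem:Fcd}, the critical points that govern the diagonal asymptotics can be restricted to the main diagonal $x_1 = \cdots = x_d = x$, where they satisfy the univariate equation $q(x) := 1 - dx + cx^d = 0$. The value $c_*$ is characterized as the unique $c>0$ for which $q$ has a double positive real root: solving $q(x) = q'(x) = 0$ forces $x = c^{-1/(d-1)}$ together with $1 = (d-1)c^{-1/(d-1)}$, whence $c = (d-1)^{d-1}$. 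For $c < c_*$ the polynomial $q$ has two distinct positive real roots $x_0(c) < c^{-1/(d-1)} < x_1(c)$; for $c > c_*$ it has no positive real root at all.

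For $c < c_*$, I would take $\zz_0 := (x_0,\ldots,x_0)$ as the dominant critical point and verify that it is strictly minimal in the polydisk sense required by ACSV. Given the reductions above, this boils down to checking that $x_0$ is the unique root of $q$ of smallest modulus and that no off-diagonal zeros of $Q$ intrude on the closed polydisk $\{|y_i|\le x_0\}$. Granted this, the smooth-point asymptotic formula of~\cite{PW-book} gives
\[
\delta_n = C\cdot x_0^{-dn}\cdot n^{-(d-1)/2}\left(1+O(n^{-1})\right),
\]
where $C$ is an explicit positive constant---positive because every factor (a component of $\grad Q(\zz_0)$ and the reciprocal of a square root of a positive-definite Hessian determinant) is real and positive at the positive real smooth point $\zz_0$. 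Eventual positivity of $\delta_n$ is immediate. The conical neighborhood statement then follows from the uniform version of the smooth-point asymptotic: as $\rhat$ varies in a neighborhood of the diagonal direction, the minimizing point $\zz(\rhat)$ and its leading constant $C(\rhat)$ deform smoothly and remain positive by continuity, so $a_\rr>0$ for all but finitely many $\rr$ in an open cone about $(1,\ldots,1)$.

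For $c > c_*$, $q$ has no positive real root, so the diagonal critical points of smallest modulus are necessarily non-real and come in complex conjugate pairs $\zz=(x,\ldots,x)$ and $\bar\zz$ for some root $x$ of $q$ with $\arg(x)\ne 0, \pi$. Summing their ACSV contributions yields a leading term of the form $C\cdot x_c^{-dn}\,n^{-(d-1)/2}\cos(dn\arg(x)+\psi)$ with $x_c=|x|$ and $\psi$ a fixed real phase; since $\arg(x)\notin\pi\Z$, the cosine factor takes both positive and negative values infinitely often (using equidistribution if $\arg(x)/\pi$ is irrational and straightforward periodicity otherwise), so $\delta_n$ does as well. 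The main obstacle in the argument is the minimality verification for $c<c_*$---specifically, ruling out off-diagonal zeros of $Q$ inside the closed polydisk of radius $x_0$ and confirming that $x_0$ dominates all other roots of $q$ in modulus---after which the rest follows by direct application of the smooth-point ACSV formulas together with elementary continuity arguments.
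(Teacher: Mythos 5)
Your overall strategy is the paper's: reduce to diagonal critical points via Lemma~\ref{lem:GWS} and Lemma~\ref{lem:Fcd}, study the roots of $q(x)=1-dx+cx^d$, and feed the result into the smooth-point formula. But the proposal has genuine gaps at exactly the places where the paper does real work. For $c<c_*$ you flag, but do not prove, the key quantitative fact: that the positive root $x_0$ is the \emph{unique} root of $q$ of minimal modulus (this is what makes $E$ a singleton in the positive orthant, so that part~(i) of Corollary~\ref{cor:asym pos} applies). The paper settles this in Proposition~\ref{pr:c < c_*} by a sign change on $[1/d,1/(d-1)]$ together with Rouch\'e's theorem comparing $-dz$ with $1+cz^d$ on $|z|=1/(d-1)$; without such an argument your positivity claim for the leading term is incomplete. (Note also that the ``off-diagonal zeros in the closed polydisk'' part of your obstacle is already disposed of by the two lemmas you cite: Lemma~\ref{lem:GWS} gives polydisk minimality and Lemma~\ref{lem:Fcd} says any critical point on the boundary torus is diagonal; the genuinely missing piece is the uniqueness of the minimal-modulus root of $q$.)

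For $c>c_*$ your argument has a concrete error plus an unjustified assumption. The absence of \emph{positive} real roots does not make the minimal-modulus roots non-real: for odd $d$ (the theorem covers all $d\ge 4$) the polynomial $q$ has a negative real root, which may well be of minimal modulus, and then your conjugate-pair-cosine analysis does not apply. The paper avoids this by only needing that no minimal root has $z^d$ positive real (a negative real root $u$ has $u^d<0$ when $d$ is odd), and then invoking part~(iii) of Corollary~\ref{cor:asym pos}. Moreover, you assume a single dominant conjugate pair; the paper explicitly states this is only conjectural and arranges the proof so as not to need it. Finally, even granting one pair, the ``straightforward periodicity'' case is delicate: for rational $\arg(x)/\pi$ the cosine term can vanish identically (or vanish along the relevant residues), in which case the leading term gives no sign information, and in general one must show the oscillating leading term beats the $O(1/n)$-relative error infinitely often. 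This is precisely what the Bell--Gerhold input in the proof of Corollary~\ref{cor:asym pos}(iii) provides, and your sketch does not supply a substitute.
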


Again, the result is obtained through an explicit asymptotic analysis.

\begin{theorem} \label{th:asm Fcd}
Let $\delta_n$ be the diagonal coefficients of $F_{c,d}$. Then when 
$c \neq c_*$,
\[ \delta_n = \sum_{x \in E}\left(\frac{x^{-dn}}{n^{(d-1)/2}} \cdot 
   \left(\frac{2\pi(1-(d-1)r)}{r^{(d-1)/2}}\right)^{(d-1)/2} \cdot
   \frac{1}{d^{1/2}(1-(d-1)r)}\right) \left(1+O\left(\frac{1}{n}
   \right)\right), \]
where $E$ consists of the minimal modulus roots of the polynomial 
$1/F_{c,d}(x,\dots,x)=1 - dx + c x^d$.
\end{theorem}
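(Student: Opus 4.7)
The plan is to apply the standard smooth-point ACSV asymptotic formula of~\cite{PW-book} at the minimal critical points on the diagonal $z_1 = \cdots = z_d$, reducing essentially all verifications to the univariate polynomial $q(x) := Q(x,\dots,x) = 1 - dx + cx^d$ by exploiting the symmetry of $Q = 1 - e_1 + c\,e_d$. By symmetry, any diagonal point $\ww = (x,\dots,x)$ with $q(x) = 0$ automatically satisfies the smooth critical-point equations $z_i\partial_i Q = z_j\partial_j Q$ in the direction $\rhat = (1,\dots,1)$. What needs to be checked is that, for each minimal-modulus root $x \in E$ of $q$, the point $\ww$ is strictly minimal on $\sing = \{Q = 0\}$, which is exactly the content of Lemma~\ref{lem:GWS} (Grace--Walsh--Szeg{\H o}), specialized to the GRZ class by Lemma~\ref{lem:Fcd}. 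The hypothesis $c \neq c_*$ is needed here because at $c = c_*$ two positive real roots of $q$ coalesce, producing the degeneracy that the smooth non-degenerate formula cannot handle directly.

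Once minimality and non-degeneracy are secured, the asymptotics take the form
$$\delta_n \;=\; \sum_{x \in E} \frac{\ww^{-(n,\dots,n)}}{(2\pi n)^{(d-1)/2}\,\sqrt{\det\hess(\ww)}}\cdot\frac{-1}{\ww\cdot\grad Q(\ww)}\,\bigl(1+O(1/n)\bigr),$$
where $\hess(\ww)$ is the $(d-1)\times(d-1)$ Hessian of the height function used in the saddle-point approximation. The exponential factor is $\ww^{-(n,\dots,n)} = x^{-dn}$. Using $q(x) = 0$ to substitute $c x^d = dx - 1$, one finds
$$-\ww\cdot\grad Q(\ww) \;=\; d\bigl(x - c x^d\bigr) \;=\; d\bigl(1 - (d-1)x\bigr),$$
accounting for the factor $1/(1-(d-1)x)$ in the denominator together with one power of $d$. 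Because $Q$ is symmetric, $\hess(\ww)$ is a scalar multiple of the identity plus a rank-one correction, so its determinant evaluates in closed form via the matrix determinant lemma; combining all factors and cleaning up should yield the stated expression.

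The main obstacle is verifying strict minimality of the diagonal critical points in $E$: one must rule out any other singularity of $F_{c,d}$ on the boundary torus $\{|z_i| = |x|\}$, and this is precisely where the Grace--Walsh--Szeg{\H o} reduction (Lemma~\ref{lem:GWS}) together with the GRZ-specific simplification (Lemma~\ref{lem:Fcd}) do the heavy lifting. Once those lemmas are in hand, the Hessian determinant evaluation is routine bookkeeping using the circulant structure induced by the symmetry, and matching the constants against the claimed formula is a direct calculation.
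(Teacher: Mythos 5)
Your proposal follows essentially the same route as the paper: Lemma~\ref{lem:GWS} and Lemma~\ref{lem:Fcd} identify the contributing set $E$ as the diagonal points over the minimal-modulus roots of $1-dx+cx^d$, and the smooth-point formula (Theorem~\ref{th:smooth}) then gives the stated asymptotics, with the Hessian and constant factors evaluated by symmetry exactly as in the paper (which delegates that bookkeeping to the Maple code in Appendix~A). Your justification of the hypothesis $c \neq c_*$ via coalescing roots of the codiagonal is, for a symmetric denominator at a diagonal point, equivalent to the paper's smoothness check (Proposition~\ref{pr:d-1}), since a double root of $Q(x,\dots,x)$ at a diagonal zero forces the full gradient of $Q$ to vanish there.
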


These theorems are proven in Section~\ref{sec:Fcd}, using ACSV smooth
point methods summarized in Section~\ref{sec:ACSV},  however the case 
$c = c_*$ for the GRZ rational function requires the more delicate 
results of Section~\ref{sec:lacuna}.

\subsection{Exponential drop and further results}

In the GRZ family, for even values of $d \geq 4$ the exponential growth 
rate of the coefficients drops at the special value $c = (d-1)^{d-1}$.  
This special value, and the corresponding drop in exponential growth, may 
be identified for each fixed $d$ from the differential equation annihilating 
the diagonal.  For example, when $d=4$ an annihilating differential equation 
for the diagonal of $F_{c,4}$ is computed by D-module integration in the Mathematica package
of Koutschan~\cite{Koutschan2010b}
producing the annihilating operator
$\mathcal{L}$, of order~3 and maximum coefficient degree~8, such that 
$\mathcal{L} \diag_{F_{c,4}} = 0$:
\begin{equation} 
\label{eq:Fc4}
\begin{split}
\mathcal{L} &= z^2(c^4z^4+4c^3z^3+6c^2z^2+4cz-256z+1)(3cz-1)^2
  \partial_z^3 \\
& + 3z(3cz-1)(6c^5z^5+15c^4z^4+8c^3z^3-6c^2z^2-384cz^2-6cz+384z-1)
   \partial_z^2 \\
& + (cz+1)(63c^5z^5-3c^4z^4-66c^3z^3+18c^2z^2+720cz^2+19cz-816z+1)
   \partial_z \\
& + 9c^6z^5-3c^5z^4-6c^4z^3+18c^3z^2-360c^2z^2+13c^2z-384cz+c-24.
\end{split}
\end{equation}

When $c = 27$, all coefficients in~\eqref{eq:Fc4} acquire 
enough zeros at $z = 1/81$ that the quantity $(81 z - 1)^4$
may be factored out of the entire operator, leaving the following
operator of order~3 and maximum degree~4:
\begin{equation} 
\begin{split}
{\mathcal L}_{27} := &z^2 (81 z^2 + 14 z + 1) \; \partial_z^3 
+ 3 z (162 z^2 + 21 z + 1) \; \partial_z^2 \\
&+ (21 z + 1) (27 z + 1) \partial_z + 3 (27 z + 1).
\end{split}
\end{equation}

Asymptotics for $\delta_n$ may be extracted via the methodology
described in Proposition~\ref{pr:G}.  In the special case $d=4, c=27$, 
the recursion may be found on the OEIS (entry A125143) and 
identifies $\{ \delta_n \}$ as the 
{\em Almkvist--Zudilin numbers}\footnote{That these are the diagonals 
of the rational function $F_{27, 4}$ was observed in \cite{s-apery}, 
where it is further conjectured that the coefficients of $F_{27,4}$ 
satisfy very strong congruences.}
from~\cite[sequence (4.12)($\, \delta$)]{asz-clausen}.  The known
asymptotic formula implies that $|\delta_n|^{1/n} \to 9$.  However,
as $c \neq 27$ approaches 27 from either side, we have
$$\lim_{c \to 27} \lim_{n \to \infty} |\delta_n|^{1/n} = 81;$$
in other words, the growth rate at $c=27$ drops suddenly from~81 to~9.
The occurrence of a phase change at $(d-1)^{d-1}$ for all $d$ 
and drop in exponential rate for even $d \geq 4$ had not 
previously been proved.  The special role of the case $c=(d-1)^{d-1}$ was
observed in~\cite[Example~4.4]{sz-pos} and claimed to agree with intuition
from hypergeometric functions.  
We verify this, first by identifying the singularity from an ACSV point 
of view and then by checking that this singularity indeed produces the 
observed dimension drop.

\begin{theorem}[exponential growth approaching criticality] \label{th:d-1}
For all $d \geq 2$,
$$ \lim_{c \to c_*} \limsup_{n \to \infty} |\delta_n|^{1/(dn)} = d-1 \, .$$
\end{theorem}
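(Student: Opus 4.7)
The plan is to reduce the theorem to a statement about the roots of the polynomial $p_c(x) := 1 - dx + cx^d$ and then invoke Theorem~\ref{th:asm Fcd}. For any $c \neq c_*$, that theorem expresses $\delta_n$ as an asymptotic sum over the minimal-modulus roots of $p_c$, with each term of order $n^{-(d-1)/2} |x_{\min}(c)|^{-dn}$ and nonzero leading constant (the prefactor $\tfrac{1}{d^{1/2}(1-(d-1)r)}$ vanishes only when $r = 1/(d-1)$, i.e.\ when $c = c_*$). Consequently
\[
\limsup_{n \to \infty} |\delta_n|^{1/n} = |x_{\min}(c)|^{-d}
\]
for every such $c$, and so the theorem reduces to showing that $|x_{\min}(c)| \to 1/(d-1)$ as $c \to c_*$.

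The key step is to pin down the minimal-modulus root at criticality. Direct substitution confirms $p_{c_*}(1/(d-1)) = 0$ and $p_{c_*}'(1/(d-1)) = 0$, so $x = 1/(d-1)$ is a double root of $p_{c_*}$. Substituting $y = (d-1)x$ reduces matters to showing that every root of $q(y) := y^d - dy + (d-1)$ satisfies $|y| \geq 1$, with equality only at $y = 1$. Applying the identity $y^k - 1 = (y-1)(y^{k-1} + \cdots + 1)$ twice yields the factorization $q(y) = (y-1)^2 h(y)$ with $h(y) = \sum_{j=0}^{d-2}(d-1-j)\, y^j$. The reciprocal polynomial $\widetilde h(y) := y^{d-2} h(1/y) = 1 + 2y + 3y^2 + \cdots + (d-1)\, y^{d-2}$ has strictly increasing positive coefficients, so the Enestr{\"o}m--Kakeya theorem places all its roots strictly inside the open unit disk. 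Reciprocating, every root of $h$ (hence every root of $q$ other than $y=1$) satisfies $|y| > 1$, which proves that $1/(d-1)$ is the unique minimal-modulus root of $p_{c_*}$.

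By continuity of roots in the coefficients, for $c$ in a punctured neighborhood of $c_*$ the polynomial $p_c$ has two roots---a pair of real roots when $c < c_*$ or a complex conjugate pair when $c > c_*$---clustered near $1/(d-1)$, while all remaining roots stay bounded away from this point. Hence for $c$ sufficiently close to $c_*$, $x_{\min}(c)$ lies in the near-$1/(d-1)$ cluster and $|x_{\min}(c)| \to 1/(d-1)$ as $c \to c_*$. Combined with the first paragraph this gives $\lim_{c \to c_*} \limsup_n |\delta_n|^{1/n} = (d-1)^d$, and the theorem follows on extracting $d$-th roots.

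The main obstacle is the second step: ruling out that some other root of $p_{c_*}$ sneaks inside the circle of radius $1/(d-1)$, which Enestr{\"o}m--Kakeya dispatches cleanly via the reciprocal polynomial. A subsidiary concern in the regime $c > c_*$ is that the two conjugate minimal-modulus roots contribute oscillatory terms whose interference might depress the limsup; but the nonvanishing of the leading constant for $c \neq c_*$ ensures $|\delta_n|$ genuinely attains order $|x_{\min}(c)|^{-dn}$ along a subsequence of $n$, so the exponential rate is indeed $|x_{\min}(c)|^{-d}$ as needed.
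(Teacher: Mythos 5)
Your proposal is correct, and its overall architecture matches the paper's: both arguments reduce the theorem, via Theorem~\ref{th:asm Fcd}, to tracking the minimal-modulus roots of the codiagonal $1-dx+cx^d$ as $c\to c_*$, and both hinge on the fact that at $c=c_*$ the unique minimal-modulus root is the double root at $1/(d-1)$, after which root continuity (the double root splitting into a real pair for $c<c_*$ and a conjugate pair for $c>c_*$) finishes the job. Where you genuinely diverge is in how that central fact is proved: the paper uses a short equality-case-of-the-triangle-inequality argument on $|z|=1/(d-1)$ (the moduli of $1$ and $c_*z^d$ sum exactly to $|dz|$, forcing $z^d$ and then $z$ to be positive real), and it handles $c<c_*$ by reusing the Rouch{\'e}-based proposition giving a unique root in $[1/d,1/(d-1)]$; you instead rescale to $q(y)=y^d-dy+(d-1)$, factor it as $(y-1)^2h(y)$ with $h(y)=\sum_{j=0}^{d-2}(d-1-j)y^j$, and apply Enestr{\"o}m--Kakeya to the reciprocal polynomial, then treat both sides of $c_*$ uniformly by continuity. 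Your route buys an explicit factorization and a quantitative gap (all other roots of $q$ have modulus at least $(d-1)/(d-2)$), while the paper's modulus argument is shorter and avoids any appeal to Enestr{\"o}m--Kakeya. Two small points to tidy: the prefactor $\tfrac{1}{d^{1/2}(1-(d-1)r)}$ blows up rather than vanishes at $r=1/(d-1)$ (it is the other factor, a power of $1-(d-1)r$, that vanishes there), though your conclusion that the leading constant is nonzero for $c\neq c_*$ stands; and your remark that the conjugate-pair oscillation cannot depress the limsup deserves a sentence of justification (e.g.\ since the nonreal minimal roots have $z^d$ nonreal, the cosine factor is bounded away from zero along a subsequence, by equidistribution or periodicity), a point the paper itself also leaves implicit.
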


\begin{theorem}[dimension drop at criticality] \label{th:d-1 critical}
When $c = c_*$ and $d\geq4$ is even, 
$$\limsup_{n \to \infty} |\delta_n|^{1/(dn)} < d-1 \, .$$
\end{theorem}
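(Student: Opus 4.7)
The plan is to apply the ACSV framework combined with the topological lacuna result (Theorem~\ref{th:lacuna}). Writing $\delta_n$ as a Cauchy integral over a small torus around the origin, I would use the symmetry reduction of Lemma~\ref{lem:GWS} together with the further simplification of Lemma~\ref{lem:Fcd} to reduce the search for minimal critical points on the variety $\{Q = 0\}$ to the analysis of roots of the univariate diagonal polynomial $p_c(x) := 1 - dx + c x^d$. The exponential growth rate of the contribution from a diagonal critical point $x \cdot (1,\ldots,1)$ is $|x|^{-d}$, so locating the root structure of $p_{c_*}$ is the first task.

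Next I would analyze $p_{c_*}$ in detail. A direct computation shows that $p_{c_*}$ has a double root at $x_* = 1/(d-1)$: both $p_{c_*}(x_*) = 0$ and $p_{c_*}'(x_*) = 0$, while $p_{c_*}''(x_*) > 0$. For $x > 0$ this is the unique positive real root since $x_*$ is the (strict) minimum of $p_{c_*}$ on $(0,\infty)$; for $x < 0$ and $d$ even, every term of $p_{c_*}(x)$ is nonnegative, so no negative real roots occur. Hence the remaining $d-2$ roots are complex, forming conjugate pairs whose moduli are strictly greater than $x_*$. The $d=4$ case is illustrative: $p_{27}(x) = 27(x-1/3)^2(x^2 + 2x/3 + 1/3)$, with complex roots of modulus $1/\sqrt{3} > 1/3$, consistent with the drop from $81$ to $9$ observed for the Almkvist--Zudilin numbers.

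The decisive step is to invoke Theorem~\ref{th:lacuna} to conclude that, for even $d$, the contribution of the degenerate diagonal critical point at $x_* \cdot (1,\ldots,1)$ to the ACSV asymptotic expansion of $\delta_n$ vanishes identically. Geometrically, the cycle of integration can be pushed through the saddle at height $|x_*|^{-d} = (d-1)^d$ without picking up any residue there, leaving only contributions from the complex critical points coming from the roots of $p_{c_*}$ of strictly larger modulus. Standard ACSV bookkeeping then yields
\[
\limsup_{n \to \infty} |\delta_n|^{1/(dn)} \leq \max\{\, |x|^{-1} : x \in E' \,\} < \frac{1}{x_*} = d-1,
\]
where $E'$ denotes the set of roots of $p_{c_*}$ other than $x_*$. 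This is the required inequality.

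The main obstacle is verifying the hypothesis of Theorem~\ref{th:lacuna} for the GRZ family. The vanishing is a topological statement about the relative homology class of the descending cycle at $x_*$, and its validity depends essentially on the parity of $d$: for even $d$ the two colliding critical points (which split apart as $c$ is perturbed away from $c_*$) merge with opposite orientations, so the associated local cycle is null-homologous after deformation, whereas for odd $d$ an analogous cancellation does not occur. Checking that the GRZ denominator $1 - e_1 + c_* e_d$ meets the orientation conditions of Theorem~\ref{th:lacuna} is the technical heart of the argument; once this is established, the strict drop in exponential growth follows by the bookkeeping above.
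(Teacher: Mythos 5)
Your overall route is the paper's: everything is made to rest on Theorem~\ref{th:lacuna}, applied to $Q_{c_*,d}$ at its unique non-smooth point on the critical torus, so that the integration chain can be slipped strictly below the critical height and the exponential growth drops. The supporting root analysis of $1-dx+c_*x^d$ (double root at $1/(d-1)$, no other real roots for even $d$, all remaining roots of larger modulus) is also the same bookkeeping the paper carries out in the proof of Theorem~\ref{th:d-1}. So this is not a different proof, but it is incomplete at exactly the point the paper actually discharges.

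The gap is your final paragraph. You leave the hypothesis-checking for Theorem~\ref{th:lacuna} as an unresolved ``technical heart,'' and what you propose to check is not what the theorem asks. There are no separate orientation conditions about the two critical points of the codiagonal that collide as $c\to c_*$, and the relevant parity is not tied to that collision: the theorem requires $d>2k$ even (here $k=1$ and $d\geq 4$ even, so this is automatic), that $\sing$ have a unique singular point at the maximizing height, and that the tangent cone there be a nondegenerate quadratic of Lorentzian type. This is precisely what Proposition~\ref{pr:d-1} supplies: the only non-smooth point is $\zz_*=(1/(d-1),\dots,1/(d-1))$, and the tangent cone there is a positive multiple of $e_2(\yy)=\tfrac12\bigl((y_1+\dots+y_d)^2-(y_1^2+\dots+y_d^2)\bigr)$, whose signature $(1,d-1)$ means it transforms by a real linear map to $z_d^2-\sum_{j=1}^{d-1}z_j^2$. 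Two further cautions: your concluding bound $\limsup_n|\delta_n|^{1/(dn)}\leq\max\{|x|^{-1}:x\in E'\}$ claims more than Theorem~\ref{th:lacuna} delivers, since the theorem only places the chain $\Gamma$ at height at least $\ee$ below that of $\zz_*$ and does not localize the remaining contribution at the other roots of the codiagonal; the weaker statement already gives $\limsup_n|\delta_n|^{1/(dn)}\leq (d-1)e^{-\ee}<d-1$, which is all that is needed. Likewise, your assertion that all non-real roots have modulus strictly exceeding $1/(d-1)$ does not follow merely from their being non-real; it needs the triangle-inequality argument in the proof of Theorem~\ref{th:d-1}, although it becomes unnecessary once you settle for the $\ee$-drop.
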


Theorem~\ref{th:d-1 critical} is proved in Section~\ref{sec:lacuna}.

\section{ACSV} \label{sec:ACSV}

In this section we describe the basic setup for ACSV and state some
existing results.  Definitions for the topological and geometric 
quantities used below can be found in Pemantle and Wilson~\cite{PW-book}.
Throughout this section let $F(\zz) = P(\zz)/Q(\zz)
= \sum_\rr a_\rr \zz^\rr$ denote a rational series in $d$ variables, 
with $P$ and $Q$ co-prime polynomials.
Assume that $F$ has a (finite) positive radius of convergence; that is, 
$Q(\zero) \neq 0$ and $P/Q$ is not a polynomial.  Let $\sing := 
\{\zz \in \C^d : Q(\zz) = 0\}$ denote the singular variety for $F$ and let 
$\M = (\C^*)^d \setminus \sing$ where $\C^* = \C \setminus \{ 0 \}$.  
Coefficients $a_\rr$ are extracted via the multivariate Cauchy formula
\begin{equation} \label{eq:cauchy}
a_\rr = \frac{1}{(2 \pi i)^d} \int_{\TT} \zz^{-\rr} F(\zz) \frac{d\zz}{\zz},
\end{equation}
where $d\zz / \zz$ denotes the holomorphic logarithmic volume form
$(dz_1 / z_1) \wedge \cdots \wedge (dz_d/z_d)$ and $\TT$ denotes a
small torus (a product of sufficiently small circles about
the origin in each coordinate, so that the product of the corresponding
disks is disjoint from $\sing$).  The fundamental insight of ACSV is
that the integral depends only on the homology class of $\TT$ in 
$H_d(\M)$.  Therefore, one tries to replace $\TT$ by some homologous
chain $\CC$ over which the integral is easier, typically via some 
combination of residue reductions and saddle point estimates.

A {\em direction} of asymptotics is an element $\rhat \in (\RP^d)^+$;
that is, a projective vector in the positive orthant. If $\rr \in (\R^d)^+$
we write $\rhat$ to denote the representative $\rr/|\rr|$ of the
projective equivalence class containing 
$\rr$, where $|\rr|=|\rr|_1 := r_1+\cdots+r_d$.
Given a Whitney stratification of $\sing$ into smooth manifolds, the 
{\em critical
set} $\crit(\rhat)$ for a direction $\rhat$ is the set of $\zz \in \sing$ 
such that $\rhat$ is orthogonal to the tangent space of the stratum of 
$\zz$ in $\sing$.  If $\zz$ is a smooth point of $\sing$ and $Q$ is 
square-free, this means $\rhat$ should be parallel to the logarithmic gradient 
$(z_1 \partial Q / \partial z_1, \ldots , z_d \partial Q / \partial z_d)$.
A {\em minimal} point for direction $\rhat$ is a point $\zz \in \crit (\rhat)$
such that the open polydisk $\D (\zz) := \{ \ww : |w_j| < |z_j| \, 
\forall 1 \leq j \leq d \}$ does not intersect $\sing$.  The minimal 
point $\zz$ is called {\em strictly minimal} if the closed polydisk
$\overline{\D (\zz)}$ intersects $\sing$ only at $\zz$.  

For any $\bbeta \in \R^d$,
let $\TT (\bbeta) = \{ \ww : |w_j| = \exp (\beta_j) \,\, \forall \, 1 \leq 
j \leq d \}$ denote the torus of points with log modulus vector $\bbeta$.
The \emph{amoeba} of $Q(\zz)$ is the image of $\sing$ under the map
$\text{Relog}(\zz) = (\log|z_1|,\dots,\log|z_d|)$, while the
\emph{height} of a point $\zz$ is $h_{\rhat}(\zz) = -\rhat \cdot \text{Relog}(\zz)$.
Except in Section~\ref{sec:lacuna}, 
all ACSV computations are based on the following result.
\begin{theorem}[smooth point formula] \label{th:smooth}
Fix $F = P/Q = \sum_\rr a_{\rr} \zz^\rr$ and vector $\rr \in (\R^d)^+$
in direction $\rhat$.  Assume there exists $\bbeta \in \R^d$ such that 
the following two hypotheses hold.

\begin{enumerate}[{\bf 1}]
\item {\bf Finite critical points on the torus.}  \label{i:smooth}
The set $E : = \TT(\bbeta) \cap \crit(\rhat)$ is finite, nonempty 
and contains only minimal smooth points. 
\item {\bf Quadratic nondegeneracy}.
At each $\zz \in E$ fix $k=k(\zz)$ such $\partial Q / \partial z_k (\zz) \neq 0$ 
and let $z_k = g(z_1,\dots,\hat{z_k},\dots,z_d)$ be a smooth local 
parametrization of $z_k$ on $\sing$ as a function of $\{ z_j : j \neq k \}$. 
We assume that the Hessian determinant $\hess_{k(\zz)}$ of second partial 
derivatives of $g\left(w_1e^{i\theta_1},\dots,w_de^{i\theta_d}\right)$ 
with respect to the $\theta_j$ at the origin is non-zero for each 
$\zz \in E$. 
\end{enumerate}

Then there exists a closed neighborhood $\nbd$ of $\rhat$ in $(\R^d)^+$ on which all
the above hypotheses hold and, for any $\rr$ with $\rhat$
in this neighborhood, 
%%%there are integers $\{ m(\zz) : \zz \in E \}$ such that
%
\begin{equation} \label{eq:smooth}
a_\rr = ( 2 \pi)^{(1-d)/2} \sum_{\zz \in E} 
   \det \hess_{k(\zz)}^{-1/2} 
   \frac{P(\zz)}{z_k (\partial Q / \partial z_k)(\zz)} r_k^{(1-d)/2} \zz^{-\rr}
   + O \left ( r_k^{-d/2} \zz^{-\rr} \right ) \, .
\end{equation}  
%%%If some $\zz \in E$ is minimal (hence every $\zz \in E$ is minimal), 
%%%then $m(\zz) = 1$ for all $\zz \in E$.
\end{theorem}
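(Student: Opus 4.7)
The plan is to apply the standard ACSV toolkit: deform the Cauchy torus through the critical points in $E$, reduce the $d$-dimensional integrals near each critical point to $(d-1)$-dimensional residue integrals on $\sing$, and then evaluate those by a multivariate saddle point expansion.

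\textbf{Step 1: Contour deformation.} Starting from~\eqref{eq:cauchy} on a small torus $\TT$ around the origin, I would deform $\TT$ through a one-parameter family of chains in $\M$, pushing its log-modulus vector out toward $\bbeta$. Hypothesis~\textbf{1} and minimality guarantee that the intermediate chains may be chosen to stay in $\M$ until the final chain coincides, outside an arbitrarily small neighborhood of $E$, with $\TT(\bbeta)$, the only obstruction being the pinching at points of $E$. Standard ACSV surgery (as in~\cite{PW-book}) replaces the chain locally at each $\zz\in E$ by a quasi-local cycle: a product of a small torus in the coordinates $\{z_j : j\neq k\}$ with a small circle about $\sing$ in the $z_k$ direction.

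\textbf{Step 2: Residue reduction.} At each $\zz\in E$, the nonvanishing of $\partial Q/\partial z_k(\zz)$ lets me use $Q$ as a local coordinate in the $z_k$ slot. Applying a one-dimensional residue in $z_k$ (equivalently, Leray's residue) converts the local Cauchy integrand into the holomorphic $(d-1)$-form
\[ \frac{P(\zz)\,\zz^{-\rr}}{z_k\,(\partial Q/\partial z_k)(\zz)}\,\bigwedge_{j\neq k}\frac{dz_j}{z_j}, \]
integrated over the slice of $\sing$ swept out by the torus in the remaining coordinates. Writing $z_j=|z_j|e^{i\theta_j}$ for $j\neq k$ and using the local parametrization $z_k=g(z_1,\dots,\hat{z_k},\dots,z_d)$ turns each contribution into a $(d-1)$-dimensional oscillatory integral in the angular variables $\theta_j$.

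\textbf{Step 3: Saddle point expansion.} The criticality of $\zz$ (i.e.\ the logarithmic gradient of $Q$ at $\zz$ being parallel to $\rhat$) translates into the vanishing of the first angular derivatives of the phase $-\rhat\cdot\log\zz$ at $\theta=\mathbf{0}$, so each oscillatory integral has a nondegenerate critical point whose quadratic form is exactly $\hess_{k(\zz)}$ from hypothesis~\textbf{2}. A standard $(d-1)$-dimensional stationary phase expansion then yields for each $\zz\in E$ the leading term
\[ (2\pi)^{(1-d)/2}\,\det\hess_{k(\zz)}^{-1/2}\,\frac{P(\zz)}{z_k(\partial Q/\partial z_k)(\zz)}\,r_k^{(1-d)/2}\,\zz^{-\rr}, \]
with remainder $O(r_k^{-d/2}\zz^{-\rr})$ from the next term of the asymptotic series; summing over $\zz\in E$ produces~\eqref{eq:smooth}. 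The neighborhood $\nbd$ of $\rhat$ comes from the fact that minimality, criticality, and quadratic nondegeneracy are all open conditions. The technically delicate step is Step~1: one must show that, away from a small neighborhood of $E$ on $\TT(\bbeta)$, the height $h_\rhat$ is strictly smaller than its value on $E$, so that the non-critical portion of the final contour contributes only exponentially smaller terms absorbed into the error. This is the content of the surgery developed in~\cite{PW-book}, which I would invoke rather than rederive.
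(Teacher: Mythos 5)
Your overall route is the same one the paper takes: localize the Cauchy integral to quasi-local cycles at the points of $E$, perform a residue in the $z_k$ variable, and finish with a nondegenerate stationary phase expansion whose Hessian is $\hess_{k(\zz)}$. Your Steps~2 and~3 correspond exactly to the paper's citation of \cite[Theorem~9.2.7]{PW-book}, and your remark that the hypotheses are open in $\rhat$ gives the neighborhood $\nbd$ just as in the paper. The problem is Step~1, which is precisely where the paper has to do work beyond ``invoke the surgery.'' First, your claim that the deformed chain can be made to coincide with $\TT(\bbeta)$ outside a small neighborhood of $E$ is not justified by the hypotheses: only $\TT(\bbeta)\cap\crit(\rhat)$ is assumed finite, so $\sing$ may meet $\TT(\bbeta)$ in a large (even positive-dimensional) set of non-critical points, and near those the torus cannot simply be pushed through; one needs the residue-form/intersection-cycle formalism rather than a naive pinching picture. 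Second, the localization result you would invoke, \cite[Theorem~9.4.2]{PW-book}, which identifies the intersection cycle as a sum of quasi-local cycles at the points of $E$, carries additional hypotheses not present in the theorem being proved: that the direction is \emph{proper} in the sense of \cite[Definition~2.3]{BP-cones} (equivalently, in the first part of the paper's proof, that $\log|\ww|$ is the unique minimizer of $\rr\cdot\xx$ on the boundary of the log-domain of convergence). Citing the surgery as a black box therefore proves the theorem only under that extra assumption.

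The paper's proof devotes its second half to removing exactly this hypothesis, and that argument (or some substitute) is missing from your proposal: one constructs the intersection cycle by expanding from $\TT(\bbeta-\ee\rr)$ to $\TT(\bbeta+\ee\rr)$ using \cite[Appendix~A.4]{PW-book}, obtaining a compact $(d-1)$-chain representing a relative cycle in $H_{d-1}(\sing^{c+\ee},\sing^{c-\ee})$; one then applies the downward gradient flow of the height function $h_{\rhat}$ on $\sing$ for an arbitrarily small time to bring the chain into the form required by Theorem~9.4.2, and one must account for the extra chain swept out by the flow applied to the (nonvanishing) boundary. That boundary sweep stays at height at most $c-\ee$, so its contribution is exponentially smaller and is absorbed into the error term of~\eqref{eq:smooth}. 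Without this step (or an added properness hypothesis), your Step~1 is a genuine gap.
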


\begin{unremark}
A number of other formulae for $a_\rr$ are equivalent to this one
and hold under the same hypotheses.
An explicit formula for $\hess_k$ in terms of partial derivatives 
of $Q$ is given in~\cite[Theorem 54]{melczer-phd}.  The following 
coordinate-free formula for the constants involved in terms of the 
complexified Gaussian curvature $\curv$ at a smooth point $\zz \in \sing$ 
is given in~\cite[(9.5.2)]{PW-book} as
\begin{equation} 
\label{eq:curv}
a_\rr = (2 \pi)^{(1-d)/2} \left [ \sum_{\zz \in E} %%%m(\zz) 
   \curv (\zz)^{-1/2} \; |\grad_{\log} Q(\zz)|^{-1} P(\zz) \; 
   |\rr|^{(1-d)/2} \, \zz^{-\rr} \right ] 
+ O \left ( |\rr|^{-d/2} |\zz|^{-\rr} \right )
\end{equation}
\end{unremark}

\begin{proof}
Assume first that $\log |\ww|$ is
the unique minimizer of $\rr \cdot \xx$ on the boundary of the log domain
of convergence (this being a component of the complement of the amoeba).
Under no assumptions on $E$ or $\curv$, Theorem~9.3.2 of~\cite{PW-book} 
writes the multivariate Cauchy integral~\ref{eq:cauchy} as the integral 
of a residue form $\omega$ over an intersection cycle, $\CC$.  Taking into 
account that $E$ is finite, and assuming an extra hypothesis that $\rr$ is
a {\em proper direction} (see~\cite[Definition~2.3]{BP-cones}), 
%%%and assuming a hypothesis that the boundary of
%%%the amoeba of $Q$ contains no line segment, 
Theorem~9.4.2 of~\cite{PW-book} 
identifies $\CC$ as a sum of quasi-local cycles near the points of $E$.  
For each such $\zz$, if $\partial Q / \partial z_k$ and $\det \hess_k$ 
do not vanish, Theorem~9.2.7 of~\cite{PW-book} identifies the integral 
as the corresponding summand in~\eqref{eq:smooth}.  
Nonvanishing of $\hess_k$ is equivalent to nonvanishing of $\curv$,
leading to the coordinate-free formula~\eqref{eq:curv}, which may be
found in~\cite[Theorem~9.3.7]{PW-book}.  This proves the theorem under
an extra hypothesis on the amoeba boundary.  

To remove the properness hypothesis, consider the intersection cycle $\CC$
obtained from expanding the torus $\TT(\bbeta - \epsilon\rr)$ inside the domain of convergence
of $F$ to a torus  $\TT(\bbeta + \epsilon\rr)$.
The construction in~\cite[Section~A4]{PW-book}
gives a compact $(d-1)$-chain representing a relative cycle in 
$H_{d-1} (\sing^{c+\ee} , \sing^{c-\ee})$; that is, a chain of
maximum height $c+\ee$ with maximum boundary height $c-\ee$.  
Applying the downward gradient flow of $h_{\rhat}$ on $\sing$ for arbitrarily small time, 
we arrive again at a chain satisfying the conclusions 
of~\cite[Theorem~9.4.2]{PW-book}.  Because the deformed chain 
has nonvanishing boundary, one must add a term for the chain swept
out by the deformation applied to this boundary, but the elements of this 
chain have height at most $c - \ee$ so the resulting integral will
be within the error term above. 
\end{proof}

\begin{corollary} \label{cor:asym pos}
Assume the hypotheses of Theorem~\ref{th:smooth}, and fix a vector 
$\mathbf{v}$ in direction $\rhat$.
\begin{enumerate}[(i)]
\item If $E = \{ \zz \}$ for some $\zz$ in the positive real orthant 
in $\C^d$ and the leading constant of Equation~\eqref{eq:smooth} is positive, 
then there exists a neighbourhood of $\rhat$ such that all but finitely 
many coefficients $\{ a_\rr : \rhat \in \nbd \}$ are positive. \label{item:1}
\item If $E = \{ \zz \}$ for some $\zz$ such that
$\zz^{\bf v} := \prod_{j=1}^d z_j^{v_j}$ is positive real and the leading 
constant of Equation~\eqref{eq:smooth} is positive, 
then all but finitely many coefficients $a_{n\bf v}$ are positive. 
\label{item:2}
\item If $E$ does not contain a point $\zz$ with $\zz^{\bf v}$
positive real and the sum in Equation~\eqref{eq:smooth} is not 
identically zero, then infinitely many coefficients $a_{n\bf v}$
are positive and infinitely many $a_{n\bf v}$ are negative. \label{item:3}
\end{enumerate}
\end{corollary}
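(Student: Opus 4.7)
The plan is to read all three statements directly off the asymptotic expansion
\[
a_\rr \;=\; r_k^{(1-d)/2}\sum_{\zz\in E}C_\zz\,\zz^{-\rr}\;+\;O\!\left(r_k^{-d/2}\zz^{-\rr}\right),
\]
provided by Theorem~\ref{th:smooth}, where $C_\zz$ denotes the prefactor appearing in~\eqref{eq:smooth}. The decisive observation is that every $\zz\in E$ lies on the common torus $\TT(\bbeta)$, so the factors $|\zz^{-\rr}|$ all share one common value and the error term is a uniform factor of $r_k^{-1/2}$ smaller than the modulus of any single summand in the leading term.

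Part~(i) is then immediate: the single critical point $\zz$ is positive real, so $\zz^{-\rr}>0$ for every $\rr\in(\R^+)^d$ and the leading term is a positive multiple of $r_k^{(1-d)/2}$ which swamps the error once $r_k$ is large. The hypotheses of Theorem~\ref{th:smooth} are open, and $C_\zz$ and $\zz^{-\rr}$ depend continuously on the direction, so the lower bound is uniform on a neighborhood $\nbd$ of $\rhat$, yielding $a_\rr>0$ for all but finitely many $\rr$ with $\rhat\in\nbd$. Part~(ii) is the same argument specialized to $\rr=n\vv$: by hypothesis $\zz^{-n\vv}=(\zz^{\vv})^{-n}$ is positive real, so the lone leading summand is a positive scalar times $(nv_k)^{(1-d)/2}$, which dominates the error once $n$ is large.

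For part~(iii) I factor out the common modulus $\rho:=|\zz^\vv|$ (independent of $\zz\in E$) and introduce the real exponential sum
\[
N(n)\;:=\;\rho^{n}\sum_{\zz\in E}C_\zz(\zz^{\vv})^{-n}\;=\;\sum_{\zz\in E}C_\zz\,e^{-in\theta_\zz},\qquad\theta_\zz:=\arg\zz^\vv;
\]
reality of $N$ follows from the invariance of $E$ and of the prefactors $C_\zz$ under complex conjugation, since $Q$ has real coefficients. The leading term of $a_{n\vv}$ is then $(nv_k)^{(1-d)/2}\rho^{-n}N(n)$ while the error is $O(n^{-d/2}\rho^{-n})$, so $a_{n\vv}$ inherits the sign of $N(n)$ whenever $|N(n)|$ is bounded below by a fixed positive constant. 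The hypothesis that no $\zz\in E$ has $\zz^{\vv}$ positive real means no $\theta_\zz\in 2\pi\Z$, so $N$ has mean zero; the hypothesis that the sum is not identically zero, together with Parseval, yields strictly positive mean square. A real, non-zero almost periodic sequence with mean zero and positive mean square is positive on a set of positive lower density and negative on a set of positive lower density, which is exactly the conclusion needed.

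The main obstacle is this final transfer from the sign of $N(n)$ to the sign of $a_{n\vv}$: the error in the asymptotic expansion dominates the leading term precisely on the zero set of $N$, so one cannot simply quote the expansion pointwise. The almost-periodic input sidesteps this by producing two infinite subsequences of $n$ along which $|N|$ is bounded away from zero and on which $N$ has a fixed sign, so the error cannot mask the sign of $a_{n\vv}$ on those subsequences.
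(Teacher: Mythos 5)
Your proof is correct, and for parts (i) and (ii) it coincides with the paper's one-line argument (a single positive leading term swamps the error, with the neighbourhood in (i) supplied by the uniformity already built into Theorem~\ref{th:smooth}). For part (iii) you take a genuinely different route: the paper groups $E$ into conjugate pairs, writes the rescaled leading term as $l_n=\sum_i a_i\cos(2\pi\theta_i n+\beta_i)$ with $\theta_i\in(0,1)$, and then \emph{cites} Bell and Gerhold to conclude that $l_n$ exceeds (and, applied to $-l_n$, falls below) any $O(1/n)$ linear-recurrence bound infinitely often, which is exactly how it defeats the error term near the zeros of $l_n$. You instead make this step self-contained: your $N(n)$ is a real Bohr almost periodic sequence; the hypothesis that no $\theta_\zz\equiv 0\pmod{2\pi}$ gives mean zero, non-vanishing of the sum gives positive mean square after grouping equal frequencies (this grouping is worth stating explicitly, since distinct points of $E$ can share the argument of $\zz^{\vv}$ and the "not identically zero" hypothesis is precisely what rules out total cancellation), and then $M(N^+)=M(N^-)>0$ forces the level sets $\{N>\delta\}$ and $\{N<-\delta\}$ to have positive (lower) density for small $\delta$ --- here you should note that the mean of $N^{\pm}$ exists because $N^{\pm}$ is again almost periodic, or simply settle for positive upper density, which already gives infinitely many $n$. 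On those sets the $O(n^{-1/2})$ relative error cannot flip the sign, which is the same obstacle the paper's citation is deployed to overcome. The trade-off: the paper's argument is shorter and leans on an external quantitative result about recurrence sequences; yours is elementary and self-contained, at the cost of spelling out the almost-periodicity facts, and it yields the slightly stronger conclusion that the positivity and negativity sets have positive density rather than merely being infinite.
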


\begin{unremark}
When $E$ contains a point in the positive real orthant but it is not 
a singleton, the corollary does not provide information as to eventual 
positivity.
\end{unremark}

\begin{proof} 
Conclusions~\eqref{item:1} and~\eqref{item:2} 
follow immediately from~\eqref{eq:smooth} because the sum is a single 
positive term. 

For conclusion~\eqref{item:3}, grouping the elements of $E$ by 
conjugate pairs we note that up to scaling by $\zz^{n \bf v}n^{d/2}$ 
the asymptotic leading term of $a_{n\bf v}$ has the form
\[ l_n = \sum_{i=1}^{|E|}a_i \cos(2\pi \theta_i n + \beta_i), \]
where each $\theta_i,a_i,\beta_i$ is real, and $\theta_i \in (0,1)$.
If $r_n$ is any sequence satisfying a linear recurrence 
relation with constant coefficients, and $r_n=O(1/n)$, 
then Bell and Gerhold~\cite[Section 3]{BellGerhold2007} show that 
$l_n > r_n$ infinitely often.  Since the modulus of the error term 
in Equation~\eqref{eq:smooth} can be bounded by a linear recurrence 
sequence with growth $O(1/n)$, we see that $a_{n\bf v}$ is positive 
infinitely often.  Repeating the argument with $-l_n$ shows that 
$a_{n\bf v}$ is negative infinitely often.
\end{proof}

Any computer algebra system can compute the set of smooth critical 
points in $\crit(\rhat)$ by
solving the $d-1$ equations $(\grad_{\log}Q) (\zz) \parallel \rhat$
together with the equation $Q(\zz) = 0$, where 
$\grad_{\log}Q = \left(z_1 \partial Q/\partial z_1, \dots, z_d \partial Q/\partial z_d\right)$.  
Identifying which points 
in $\crit$ are minimal is more difficult, although still 
effective~\cite{MelczerSalvy2016}.  For our cases, we can use results 
about symmetric functions to help with the computations.
For any polynomial $Q$ in $d$ variables, let $\delta^Q$ denote the 
codiagonal: the univariate polynomial defined by $\delta^Q (x) 
= Q(x, \ldots , x)$.  

\begin{lemma}[polynomials in $\Md$ have diagonal minimal points] \label{lem:GWS}
Let $F = 1/Q$ with $Q \in \Md$.  Let $x$ be a zero of $\delta^Q$ of 
minimal modulus.  Then $\xx := (x, \ldots , x)$ is a minimal point 
for $F$ in $\crit (1, \ldots , 1)$.
\end{lemma}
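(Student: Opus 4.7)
The lemma has two content claims: first, that $\xx := (x, \ldots , x)$ belongs to $\crit(1, \ldots , 1)$, and second, that the open polydisk $\D(\xx) = \{\ww : |w_j| < |x|\ \text{for all}\ j\}$ is disjoint from the singular variety $\sing = \{Q = 0\}$.

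For the critical-point claim I would use nothing more than the symmetry of $Q$. At the diagonal point $\xx = (x,\ldots,x)$, the partial derivatives $(\partial Q/\partial z_j)(\xx)$ all take a common value $\kappa$, so the logarithmic gradient $\grad_{\log} Q(\xx) = (x\kappa)(1,\ldots,1)$ is automatically parallel to the diagonal direction $\rhat = (1,\ldots,1)$. This is the smooth-point critical condition when $\kappa \neq 0$; when $\kappa = 0$ the point $\xx$ lies in a deeper stratum of $\sing$, but that stratum is permutation-invariant, and averaging any tangent vector over the $S_d$-action produces a tangent vector proportional to $(1, \ldots , 1)$, so the normal space of the stratum either equals $\{0\}$ or is orthogonal to $(1,\ldots,1)$; either way $\xx \in \crit(1,\ldots,1)$.

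For minimality the key tool is the Grace-Walsh-Szeg\H{o} coincidence theorem, in the form: if $P(z_1, \ldots , z_d)$ is a symmetric multiaffine polynomial and $C \subseteq \C$ is an open circular region (in particular an open disk) containing $w_1, \ldots , w_d$, then there exists $u \in C$ with $P(w_1, \ldots , w_d) = P(u, \ldots , u)$. I would apply this with $P = Q$ and $C = \{z \in \C : |z| < |x|\}$: any $\ww \in \D(\xx)$ has all its coordinates in $C$, so GWS produces some $u$ with $|u| < |x|$ and $Q(\ww) = Q(u, \ldots , u) = \delta^Q(u)$. Because $x$ is a zero of $\delta^Q$ of \emph{minimal} modulus, $\delta^Q$ has no zero in the open disk $|u| < |x|$, hence $Q(\ww) \neq 0$. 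Therefore $\D(\xx) \cap \sing = \emptyset$, which is exactly minimality.

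The critical-point verification is a one-line symmetry observation, so the only nontrivial ingredient is the appeal to Grace-Walsh-Szeg\H{o} — which is why the acknowledgements credit Br\"and\'en for help with this lemma. The main obstacle is therefore locating a version of GWS whose conclusion places $u$ in the \emph{open} (rather than the closed) disk, so that strict inequality $|u| < |x|$ is preserved; the standard coincidence-theorem statement does give this, at which point the proof concludes without further effort.
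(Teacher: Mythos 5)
Your proof is correct and is in substance the same Grace--Walsh--Szeg\H{o} argument the paper uses; the difference is only in packaging. You invoke the GWS coincidence theorem directly: any $\ww \in \D(\xx)$ has all coordinates in the convex circular region $\{|z|<|x|\}$, so $Q(\ww)=\delta^Q(u)$ for some $u$ with $|u|<|x|$, and this is nonzero by minimality of $|x|$. The paper instead forms the product $M(\zz)=\prod_j(z_j-\alpha_j)$ over the roots of $\delta^Q$, observes it is zero-free on the polydisk of polyradius $|x|-\ee$, and applies the Borcea--Br\"and\'en symmetrization lemma (a modern, equivalent form of GWS) to conclude that its symmetrization, which is exactly $Q$, is zero-free there; your route transfers zero-freeness from $\delta^Q$ to $Q$ directly, the paper's transfers it from an auxiliary product polynomial to its symmetrization. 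Two small caveats on your write-up: the coincidence theorem as usually stated requires the circular region to be convex or the polynomial to have full degree $d$ --- your region is an open disk, hence convex, so the application stands, but the hypothesis should be quoted. Also, your handling of the degenerate case $\kappa=0$ of the criticality claim is loose: for an $S_d$-invariant stratum through the fixed point $\xx$, the correct dichotomy is that $(1,\ldots,1)$ either lies in the tangent space or is orthogonal to it, and the first horn must be ruled out rather than accepted (it would force the stratum to be a curve lying in the diagonal line, which meets $\sing$ only in the finitely many zeros of $\delta^Q$). Since the paper's own proof does not address membership in $\crit(1,\ldots,1)$ at all, this looseness does not put you behind its argument.
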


This follows directly from the classical Grace-Walsh-Szegő Theorem, 
a modern proof of which is contained in the following.

\begin{proof}
Let $\alpha_1 , \ldots , \alpha_k$ be the roots of $\delta^Q$, 
where $k \leq d$ is the common degree of $Q$ and $\delta^Q$ and 
$|\alpha_1|$ is minimal among $\{ |\alpha_j| : j \leq k \}$.
For any $\ee > 0$, the polynomial
$$M(\xx) := \prod_{j=1}^k (x_j - \alpha_j)$$
has no zeros in the polydisk $\D$ centered at the origin whose 
radii are $\alpha_1 - \ee$.  The symmetrization of $M$ 
(see~\cite{borcea-branden-LYPS2})
is defined to be the multilinear symmetric function $m$ such that
$m(x,\dots,x) = M(x,\dots,x)$.  In our case $M(x,\dots,x)= \delta^Q(x)$, and
it immediately follows that $m=Q$.  By the
Borcea-Br{\"a}nd{\'e}n symmetrization lemma 
(see~\cite[Theorem~2.1]{borcea-branden-LYPS2}), the polynomial
$Q$ has no zeros in the polydisk $\D$.  We conclude that the 
zero $\xx$ of $Q$ is a minimal point of $F$.  
\end{proof}

\section{Symmetric multilinear functions of three variables} \label{sec:M3}

In this section we determine the diagonal asymptotics for general
$Q = 1 - e_1 + a e_2 + b e_3 \in \M_3$.  Taking the coefficient of $e_1$
to be~1 loses no generality because of the rescaling $x_j \to \lambda x_j$
which preserves $\Md$ and affects coefficient asymptotics in a trivial way.
In order to use Theorem~\ref{th:smooth}, we begin by identifying 
minimal points.  Lemma~\ref{lem:GWS} dictates that our search should
be on the diagonal.  

To that end, let $\fd(x) = Q(x,x,x) = 1 - 3x + 3a x^2 + b x^3$.
The discriminant of $\fd$ is a positive real multiple of
$p(a,b) := 4a^3 - 3a^2 + 6ab + b^2 - 4b = (a-1 + 3(b-1))^2 - 4 (b-1)^3$, and
the zero set of $\fd$ is obtained from that of the cubic $4 b^3 = - a^2$
by centering at $(1,-1)$ and shearing via $(a,b) \mapsto (a+3b , b)$.  
The discriminant $p(a,b)$ vanishes along the red curve (solid and dashed)
in Figure~\ref{fig:1}.  Let $r_1 (a)$ and $r_2 (a)$ denote respectively 
the upper and lower branches of the solution to $p(a,b) = 0$.

\begin{figure}
\centering
\begin{tikzpicture}[x=0.8cm,y=0.08cm]
\draw[-,color=black] (-5,0) -- (4,0);
\foreach \x in {-4,-3,-2,-1,1,2,3}\draw[shift={(\x,0)},color=black] (0pt,2pt) -- (0pt,-2pt) node[below] {\footnotesize $\x$};
\draw[-,color=black] (0,-30) -- (0,45);
\foreach \y in {-30,-20,-10,10,20,30,40}\draw[shift={(0,\y)},color=black] (2pt,0pt) -- (-2pt,0pt) node[left] {\footnotesize $\y$};
\clip(-5,-30) rectangle (4,45);
\draw[line width=1pt,color=red,smooth,samples=100,domain=-5:1,opacity=0.5, dashed] plot(\x,{-3*(\x)+2-2*sqrt(-(\x)^(3)+3*(\x)^(2)-3*(\x)+1)});
\draw[line width=1pt,color=red,smooth,samples=100,domain=-5:-3,opacity=0.5, dashed] plot(\x,{-3*(\x)+2+2*sqrt(-(\x)^(3)+3*(\x)^(2)-3*(\x)+1)});
\draw[line width=1pt,color=red,smooth,samples=100,domain=-3:1,opacity=1] plot(\x,{-3*(\x)+2+2*sqrt(-(\x)^(3)+3*(\x)^(2)-3*(\x)+1)});

\draw[line width=1pt,color=Green,smooth,samples=100,domain=-5:1,opacity=0.4, dashed] plot(\x,{-(\x)^3});
\draw[line width=1pt,color=Green,smooth,samples=100,domain=1:4,opacity=1] plot(\x,{-(\x)^3});

\draw[line width=1pt,color=blue,smooth,samples=100,domain=-5:-3,opacity=1] plot(\x,{-9*(\x)});
\draw[line width=1pt,color=blue,smooth,samples=100,domain=-3:4,opacity=0.4, dashed] plot(\x,{-9*(\x)});
\end{tikzpicture}
\caption{The three regimes defined by Proposition~\ref{pr:1}, 
made up of the curves {\color{blue}$b=-9a$}, {\color{red}$p(a,b)=0$}, 
and {\color{Green}$b=-a^3$}. Dashed lines represent the curves where they 
do not determine positivity of coefficients; note smoothness in the
transitions between regimes.}
\label{fig:1}
\end{figure}
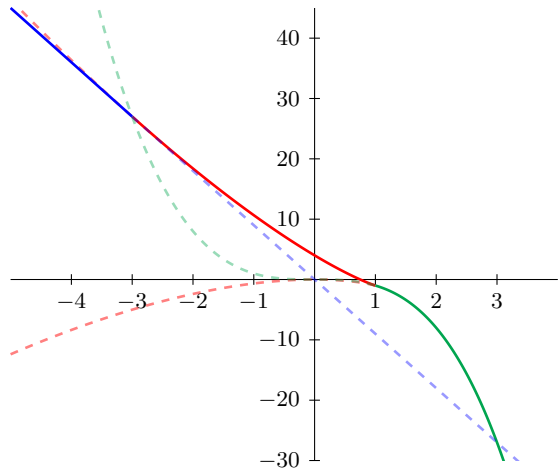

\begin{lemma}
Let $p$ be a minimal modulus root of $\fd$.  Then any critical point of $1/Q$ 
on the torus $T(p,p,p)$ has the form $(q,q,q)$ where $\fd(q)=0$.
\end{lemma}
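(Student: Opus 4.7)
The plan is to exploit the symmetry of $Q = 1 - e_1 + a e_2 + b e_3$ to factor the critical point equations in direction $(1,1,1)$ and then case-split the resulting solutions. Computing $z_i \, \partial Q/\partial z_i = -z_i + a z_i (z_j + z_k) + b z_1 z_2 z_3$, the symmetric cancellations yield
\[
z_i \frac{\partial Q}{\partial z_i} - z_j \frac{\partial Q}{\partial z_j} = (z_i - z_j)(a z_k - 1), \qquad \{i,j,k\}=\{1,2,3\}.
\]
Thus the critical system $z_1 Q_{z_1} = z_2 Q_{z_2} = z_3 Q_{z_3}$ forces, at every critical point of $1/Q$, either $z_1 = z_2 = z_3 = q$---in which case $Q(\zz)=0$ reads $\fd(q) = 0$ and the claim holds---or, up to relabeling, $z_2 = z_3 = 1/a$.

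In the residual non-diagonal case, $Q(\zz)=0$ reduces (when $a^2+b \neq 0$) to a linear equation in $z_1$ whose unique solution is $z_1 = a(1-a)/(a^2+b)$. A short algebraic check shows $z_1 = 1/a$ precisely when $b = -a^3$; on this exceptional locus the putative non-diagonal point coincides with $(1/a, 1/a, 1/a)$, and $\fd(1/a) = 1 + b/a^3 = 0$, recovering the diagonal case. When $b \neq -a^3$ (so $z_1 \neq 1/a$), placing $(z_1, 1/a, 1/a)$ on $T(p,p,p)$ would demand $|1/a| = |p|$; I would rule this out by combining the minimality of $(p,p,p)$ granted by Lemma~\ref{lem:GWS} (which ensures $\D(p,p,p) \cap \sing = \emptyset$) with the strict Grace-Walsh-Szeg\H{o}/Borcea-Br\"and\'en symmetrization correspondence used in its proof, aiming to show that $|1/a|=|p|$ forces $1/a$ itself to be a minimal-modulus root of $\fd$, which contradicts $b \neq -a^3$.

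The main obstacle I anticipate is precisely this final minimality argument: the factorization and the algebraic case split are routine, but transferring the multivariate disjointness $\D(p,p,p) \cap \sing = \emptyset$ into the univariate conclusion that $|1/a| \neq |p|$ requires a careful invocation of the Borcea-Br\"and\'en symmetrization lemma in an appropriately strict form, together with the observation that the very special sub-cases $a^2+b=0$ or $a=1$ correspond either to $F$ having the completely reducible form $1/((1-x)(1-y)(1-z))$ or to critical conditions which are themselves vacuous.
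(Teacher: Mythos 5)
Your classification of the critical points is correct and coincides with the paper's first step (the paper gets the permutations of $\bigl(\tfrac1a,\tfrac1a,\tfrac{a(1-a)}{a^2+b}\bigr)$ from a Gr\"obner basis computation; your pairwise factorization $z_iQ_{z_i}-z_jQ_{z_j}=(z_i-z_j)(az_k-1)$ yields the same list more transparently), and the observations that $z_1=1/a$ exactly when $b=-a^3$ and that $a^2+b=0$ makes the nondiagonal branch vacuous are fine. The gap is the final step. You propose to conclude by showing that $|1/a|=|p|$ alone is impossible when $b\neq-a^3$, via a ``strict'' Grace--Walsh--Szeg\H{o}/Borcea--Br\"and\'en argument. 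This cannot work. First, the intermediate claim is false: take $a=4$, $b=16$. Then $\fd(x)=1-3x+12x^2+16x^3=(x+1)(16x^2-4x+1)$ has minimal modulus roots $(1\pm i\sqrt{3})/8$ of modulus exactly $1/4=1/|a|$, while $1/a$ is not a root, $b\neq-a^3=-64$ and $a^2+b=32\neq0$. The lemma holds there only because the remaining coordinate $z_1=a(1-a)/(a^2+b)=-3/8$ has modulus different from $|p|$; no contradiction can be extracted from $|1/a|=|p|$ by itself. Second, the symmetrization lemma only transfers a zero of $Q$ whose coordinates lie in a given circular region (say the closed disk of radius $|p|$) to a root of $\fd$ in that same region; applied to a putative nondiagonal zero on $T(p,p,p)$ it merely reproduces the known fact that $\fd$ has a root of modulus at most $|p|$ (namely $p$), and no version of it forces the particular number $1/a$ to be a root of $\fd$.

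The repair is to use \emph{both} modulus constraints imposed by membership in $T(p,p,p)$: since $z_1=a(1-a)/(a^2+b)$ is real, $|z_1|=|1/a|$ forces $z_1=\pm1/a$, i.e.\ $b=-a^3$ (your diagonal coincidence) or $b=a^2(a-2)$, in which case the candidate point is a permutation of $(1/a,1/a,-1/a)$. One must then show, along the one-parameter family $b=a^2(a-2)$, that the minimal modulus roots of $\fd$ never have modulus $1/|a|$ outside the trivial reducible case $(a,b)=(1,-1)$. This is exactly what the paper does: for $a\le1$ it checks that the relevant positive root has modulus $1/|a|$ only at $(1,-1)$, and for $a>1$ it notes the minimal roots form a conjugate pair and uses the product-of-roots identity to pin down what the third, real root would have to be, verifying it is not actually a root of $\fd$. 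Some finite amount of explicit root-modulus analysis of this kind appears unavoidable; the Grace--Walsh--Szeg\H{o} machinery, which you already used to place the minimal point on the diagonal, cannot substitute for it.
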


\begin{proof}
Gr{\"o}bner basis computations show nondiagonal critical points to be
permutations of $\left(\frac{1}{a},\frac{1}{a},\frac{a(1-a)}{a^2+b}\right)$,
occurring when $b = a^2 (a-2)$.   When $a \leq 1$, the only time the positive 
root of $\fd(x)$ has modulus $1/|a|$ is the trivial case $(a,b)=(1,-1)$.  
When $b = a^2(a-2)$ and $a>1$, the modulus of the product of the roots 
of $\fd(x)$ equals $\frac{1}{a^2(a-2)}$ and the minimal 
roots of $\fd(x)$ are a pair of complex conjugates.  If this pair has 
modulus $1/a$, then the real root of $\fd(x)$ is $\pm \frac{1}{a^4(a-2)}$, 
but $\fd\left(\pm \frac{1}{a^4(a-2)}\right) \neq 0$ for $a>1$.
\end{proof}

Determining asymptotics is thus a matter of determining the minimal 
modulus roots of $\fd(x)$.  The following may be proved by comparing
moduli of roots, separating cases according to the sign of $p(a,b)$.

\begin{prop} \label{pr:1}
The function $\fd$ has a minimal positive real zero if and only if
$$b \leq \begin{cases}
   -9a & a \leq -3 \\ r_1(a) & -3 \leq a \leq 1 \\ - a^3 & a \geq 1
   \end{cases}
$$
This corresponds to the set of points lying on and below the solid curve 
in Figure~\ref{fig:1}.
\end{prop}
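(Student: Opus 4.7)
The plan is to classify the roots of $\fd(x) = 1 - 3x + 3ax^2 + bx^3$ according to the parameters $(a,b)$ and determine the region where a positive real root achieves minimum modulus. The boundary of this region must consist of curves where a positive real root shares its minimum modulus with another root, possibly together with parts of the discriminant curve $p(a,b) = 0$ on which two roots coalesce. First I would find explicit factorizations along the candidate boundaries, then use a connectedness and sample-point argument to verify that the good region is exactly the one described.

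Direct computation yields two key algebraic loci via factorization. Since $\fd(1/3) = (9a + b)/27$, the value $x = 1/3$ is a root exactly on $b = -9a$, where one gets $\fd(x) = (3x - 1)(-3ax^2 - 1)$; the quadratic factor has roots $\pm 1/\sqrt{-3a}$ (real when $a < 0$), which have modulus at most $1/3$ precisely when $a \leq -3$, so the positive root $1/\sqrt{-3a}$ ties with its negative counterpart for minimum modulus. Similarly, $\fd(1/a) = 0$ exactly on $b = -a^3$, where division gives $\fd(x) = (ax - 1)(-a^2 x^2 + (3-a)x - 1)$; the quadratic factor has discriminant $-3(a+3)(a-1)$, negative for $a > 1$, so it produces a complex conjugate pair of modulus $1/a$, tied with the positive real root $1/a$. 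A Vieta-based check---writing the roots in the form $p, -p, q$ or $p, p e^{\pm i\theta}$ with $p > 0$ and solving the resulting symmetric-function system---shows that these are the \emph{only} two curves (aside from the discriminant) on which a positive real root shares minimum modulus with a distinct root.

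Next I would analyze the upper discriminant branch $b = r_1(a)$ for $-3 \leq a \leq 1$, on which $\fd$ has a double real root and a simple real root. At the convenient sample point $(a,b) = (0, 4)$ the factorization $\fd(x) = (2x-1)^2 (x+1)$ exhibits a positive double root at $1/2$ and a negative simple root at $-1$, and the same sign pattern persists along the connected branch (verified analogously at $(a,b) = (-1,\, 5 + 4\sqrt{2})$, where the double root is $\sqrt{2}-1$ and the simple root is negative). This upper branch joins the other two boundary curves smoothly, with second-order contact at $a = -3$ (where $r_1(-3) = 27 = -9\cdot(-3)$ and the derivative of $r_1(a) + 9a$ vanishes) and with a triple-root degeneracy at the cusp tip $(a,b) = (1, -1) = (1, -1^3)$.

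Finally I would complete the proof via a connectedness argument. The complement of the three candidate boundary curves together with the lower discriminant branch $b = r_2(a)$ decomposes the $(a,b)$-plane into finitely many open components, on each of which the number, signs, and modulus ordering of the roots of $\fd$ are constant. Testing one interior point per component (e.g.\ $(0,1)$, $(0,5)$, $(2,-10)$, $(2,-5)$, $(-5,30)$, and $(-5,46)$) identifies the good region as exactly the set below the stated piecewise function. The main obstacle is verifying that the lower discriminant branch $b = r_2(a)$ is \emph{not} a boundary of the good region; direct computation at representative points on $r_2$ (such as $(-1,\, 5 - 4\sqrt{2})$ and $(1/2,\, r_2(1/2))$) shows that the positive simple root there always has strictly smaller modulus than the double root, so crossing $r_2$ preserves the property of having a positive real root of minimum modulus.
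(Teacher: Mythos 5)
Your strategy is sound, and its main computational pillars check out: the Vieta systems for the configurations $p,-p,q$ and $p,pe^{\pm i\theta}$ do force $q=1/3$, $b=-9a$ and $p=1/a$, $b=-a^3$ respectively; the factorizations $\fd(x)=(1-3x)(1+3ax^2)$ on $b=-9a$ and $\fd(x)=(1-ax)(1+(a-3)x+a^2x^2)$ on $b=-a^3$ give exactly the thresholds $a\le -3$ and $a\ge 1$; and the behaviour on the branch $r_1$ is as you describe near your sample points. The paper itself only sketches a proof (``compare moduli of roots, separating cases according to the sign of $p(a,b)$''), i.e.\ a direct case analysis on whether $\fd$ has three real roots or one real root and a conjugate pair; your route instead isolates the locus where the answer can change (modulus ties between a positive real root and a root of another type, plus the discriminant) and propagates sample-point information through the complementary components. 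What this dichotomy really gives you---and all you need---is that the indicator ``some minimal-modulus root of $\fd$ is positive real'' is locally constant off the three curves, since a simple negative root cannot become positive without passing through $0$ (impossible, as $\fd(0)=1$) and a complex pair cannot become real without crossing the discriminant.

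Three points need tightening before this is a complete proof. First, your claim that the number, signs and modulus ordering of \emph{all} roots are constant on each component is stronger than what is true: non-minimal roots may exchange modulus inside a component, and along $r_1$ the sign pattern you quote fails for $3/4<a\le 1$, where $b=r_1(a)\le 0$ and the simple root is positive (it passes through infinity at $a=3/4$, where $\fd$ drops to a quadratic; the degenerate line $b=0$ deserves a sentence, though an escaping root is never of minimal modulus, so the conclusion on $r_1$ is unaffected). Second, the ``only if'' direction also requires checking points lying \emph{on} the candidate curves strictly above the claimed boundary, which belong to no open component: the blue line for $a>3$ and the green cubic for $a<-3$ are already covered by your factorizations (complex pair of modulus less than $1/3$, respectively no positive real root at all), but the branch $r_1$ for $a<-3$ is not addressed---there the double root is positive while the simple negative root has strictly smaller modulus, so the property fails, and this case must be argued (e.g.\ by your tie argument along the branch plus one evaluation). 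Third, your six sample points are illustrative rather than exhaustive; for instance the wedge $\{a<-3,\ r_1(a)<b<-a^3\}$, which pinches at $(-3,27)$, is a separate component containing none of them (a check at $(-5,60)$ shows the unique real root is negative and of minimal modulus, as the proposition requires). None of these is a flaw in the method; they are the remaining case checks needed to close the argument.
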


\begin{proof}[Proof of Theorems~\ref{th:diag M3} and~\ref{th:asm M3}:]
Suppose $b$ is greater than the piecewise expression in the
proposition; then $\delta^Q$ has no minimal positive zero,
so the product of the three coordinates of the minimal points
determined above do not lie in the positive orthant.
By part~\eqref{item:3} of Corollary~\ref{cor:asym pos}, the
diagonal coefficients are not eventually positive.
Asymptotics of $\delta_n$ are determined by Theorem~\ref{th:smooth}, 
and when $b$ is less than the piecewise expression it can be verified 
that the dominant term is positive.
\end{proof}

\section{The Gillis-Reznick-Zeilberger classes} \label{sec:Fcd}

Throughout this section, let $F = F_{c,d} = 1/Q_{c,d} = 1 / (1 - e_1 + c e_d)$
and recall that $c_* = (d-1)^{d-1}$.
Lemma~\ref{lem:GWS} implies that for $Q \in \Md$, in the diagonal 
direction, one may find diagonal minimal points.  For $F_{c,d}$, 
things are even simpler: all critical points for diagonal asymptotics
are diagonal points.
\begin{lemma} \label{lem:Fcd}
Let $F_{c,d} = 1/Q_{c,d}$.
If $\zz \in \crit (1, \ldots , 1)$ then $z_i = z_j$ for all $1 \leq i, j \leq d$.
\end{lemma}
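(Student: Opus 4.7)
The plan is to carry out the critical point equations explicitly and observe that they immediately force the coordinates to coincide. Since $Q_{c,d} = 1 - e_1 + c e_d$, we have
\[
\frac{\partial Q}{\partial z_i} = -1 + c \prod_{j \neq i} z_j,
\]
so that
\[
z_i \frac{\partial Q}{\partial z_i} = -z_i + c\,e_d(\zz).
\]
For $\zz$ a smooth critical point in the direction $(1,\ldots,1)$, the definition stated in the excerpt requires $\grad_{\log} Q(\zz)$ to be parallel to $(1,\ldots,1)$; equivalently, the quantities $z_i\,\partial Q/\partial z_i$ are all equal. Since $c\,e_d(\zz)$ does not depend on $i$, this forces $z_i = z_j$ for all $i,j$.

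To finish I would dispose of the minor edge cases. First, if some coordinate vanishes, say $z_1 = 0$, then $e_d(\zz) = 0$ and the equations $z_i\,\partial Q/\partial z_i = -z_i$ for $i \geq 2$ together with $z_1\,\partial Q/\partial z_1 = 0$ force $z_2 = \cdots = z_d = 0$; but then $Q(\zz) = 1 \neq 0$, so no such point lies in $\sing$. Second, I would check that no singular stratum of $\sing$ contributes extra critical points: a singular point of $\sing$ satisfies $\partial Q/\partial z_i = 0$ for every $i$, giving $c\prod_{j \neq i} z_j = 1$ for all $i$, which again forces $z_1 = \cdots = z_d$. Hence in every case the critical point is diagonal.

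No real obstacle arises; the statement is essentially a one-line observation, and the only thing to be careful about is to spell out the logarithmic-gradient condition and rule out the degenerate strata, as above.
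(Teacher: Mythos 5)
Your proof is correct and follows essentially the same route as the paper: compute $z_i\,\partial Q/\partial z_i = -z_i + c\,e_d(\zz)$ and note that parallelism of $\grad_{\log}Q$ to $(1,\dots,1)$ forces all components, hence all $z_i$, to be equal. Your additional checks of the degenerate cases (a vanishing coordinate, singular strata) are harmless extra care beyond the paper's one-line argument, and your sign for the $c\,e_d$ term is in fact the correct one.
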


\begin{proof} From $Q = Q_{c,d} = 1 - e_1 + c e_d$ we see that
$(\grad_{\log} Q)_j = -z_j - c e_d$ and hence that $(\grad_{\log} Q)_i
= (\grad_{\log} Q)_j$ if and only if $z_i = z_j$.  
\end{proof}

\begin{prop}[Smoothness of $F_{c,d}$ for $c \neq c_*$] \label{pr:d-1}
Let $F_{c,d} = 1/Q_{c,d}$.  If $c \neq c_* $ 
then $\sing$ is smooth.  If $c = c_*$ then $\sing$ fails to be smooth 
at the single point $\zz_* = (1/(d-1), \ldots , 1/(d-1))$.  When $c = c_*$, 
the singularity at $\zz_*$ has tangent cone $e_2$.
\end{prop}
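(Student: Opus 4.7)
The plan is to locate the singular locus directly by computing the gradient of $Q = 1 - e_1 + c\, e_d$. Since $\partial Q/\partial x_j = -1 + c \prod_{i\neq j} x_i$, the vanishing of $\nabla Q$ at a point $\zz$ forces each $z_j$ to be nonzero (else the $j$th partial equals $-1$) and forces all the products $\prod_{i\neq j} z_i$ to equal $1/c$. The latter condition immediately implies $z_1 = \cdots = z_d =: z$. So any critical point is diagonal, consistent with Lemma~\ref{lem:Fcd}.

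The next step is to combine $c z^{d-1} = 1$ with the constraint $Q(\zz) = 0$, which becomes $1 - dz + c z^d = 0$. Substituting $c z^d = z$ reduces this to $1 = (d-1)z$, so $z = 1/(d-1)$, and then $c = z^{-(d-1)} = (d-1)^{d-1} = c_*$. This single computation covers both claims of the proposition: if $c \neq c_*$ there is no candidate singular point, so $\sing$ is smooth, and if $c = c_*$ the only candidate is $\zz_*$.

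To identify the tangent cone at $\zz_*$ when $c = c_*$, the plan is to Taylor expand $Q$ in local coordinates $y_j := x_j - r$ with $r = 1/(d-1)$. Expanding $\prod_j (r + y_j)$ in elementary symmetric functions of $\yy$ gives $r^d + r^{d-1} e_1(\yy) + r^{d-2} e_2(\yy) + \cdots$, and multiplying by $c = 1/r^{d-1}$ yields $r + e_1(\yy) + \tfrac{1}{r} e_2(\yy) + \cdots$. The constant $r$ cancels $1 - dr = -r + (1-(d-1)r) = -r$ with the $1$, and the linear term $e_1(\yy)$ cancels against $-e_1(\xx) + e_1(\xx)$ contributions; what remains to lowest order is $(d-1)\, e_2(\yy) + O(|\yy|^3)$. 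Hence the tangent cone of $\sing$ at $\zz_*$ is a nonzero scalar multiple of $e_2$.

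The only real content is the Taylor-expansion bookkeeping; the cancellation of the constant and linear terms is a direct consequence of the two identities $(d-1)r = 1$ and $c r^{d-1} = 1$, both of which encode precisely the singular-point conditions derived in the first two steps. No obstacle of substance arises, and in particular no root-counting or amoeba analysis is needed here.
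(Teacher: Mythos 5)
Your proposal is correct and follows essentially the same route as the paper: vanishing of the gradient forces all coordinates of a singular point to be equal and nonzero, combining $c z^{d-1}=1$ with $Q(\zz)=0$ pins down $z=1/(d-1)$ and $c=c_*$, and the Taylor expansion at $\zz_*$ yields the leading term $(d-1)e_2(\yy)$, i.e.\ tangent cone $e_2$. One small indexing slip: if $z_j=0$ it is the partials $\partial Q/\partial x_k$ for $k\neq j$ (not the $j$th partial) that equal $-1$, but the conclusion that singular points have all coordinates nonzero stands.
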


\begin{proof}
Checking smoothness of $\sing$ we observe that for $d$ fixed and
$c$ and $x_1, \ldots , x_d$ variable, vanishing of the gradient
of $Q_{c,d}$ with respect to the $x$ variables implies $x_j = c e_d$
for all $j$.  This common value, $x$, cannot be zero, hence $x_j \equiv x$
and $c = x^{1-d}$.  Vanishing of $Q_{c,d}$ then implies vanishing of $1 - dx + x$,
hence $x = 1/(d-1)$ and $c = c_*$.  This proves the first 
two statements.  Setting $c = c_*$ and $x_j = 1/(d-1) + y_j$ centers
$Q_{c_*,d}$ at the singularity and produces a leading term of 
$(d-1) e_2 (\yy)$, proving the third statement.
\end{proof}

\subsection{Proof of Theorems~\protect{\ref{th:asym pos}} 
and~\protect{\ref{th:asm Fcd}} in the case $c < c_*$}

When $c \leq 0$, the denominator of $F_{c,d}$ is one minus the sum 
of positive monomials, which leaves no doubt as to positivity.  
Assume, therefore, that $0 < c < c_*$.  
Apply Lemma~\ref{lem:GWS} to see that if $x$ is a
minimum modulus zero of $\delta^Q := Q_{c,d} (x, \ldots ,x)$ 
then $(x, \ldots , x)$ is a minimal point for $F_{c,d}$ in the 
diagonal direction.  Apply Lemma~\ref{lem:Fcd} to conclude that
the set $E$ in Theorem~\ref{th:smooth} of minimal critical points
on $\TT (|x|, \ldots , |x|)$ consists only of points $(y, \ldots , y)$
such that $y$ is a root of $\delta^Q$.  By part~\eqref{item:1} of 
Corollary~\ref{cor:asym pos}, it suffices to check that 
$\delta^Q = 1 - dx + c x^d$ has a unique minimal modulus root 
$\rho$ and that $\rho \in \R^+$.  Thus, the conclusion follows 
from the following proposition.

\begin{prop} \label{pr:c < c_*}
For $c \in (0,c_*)$, the polynomial $\delta^Q = 1 - dx + c x^d$ has
a root $\rho \in \left [ \frac{1}{d} , \frac{1}{d-1} \right]$
which is the unique root of $\delta^Q$ of modulus less than $1/(d-1)$.
\end{prop}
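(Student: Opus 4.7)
The plan is to separate existence of the real root from its uniqueness among all complex roots of small modulus, and then to handle uniqueness via a single application of Rouch\'e's theorem on the circle $|x| = 1/(d-1)$.

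\textbf{Existence of a real root in the interval.} I will evaluate $\delta^Q(x) = 1 - dx + cx^d$ at the two endpoints. At $x = 1/d$ one obtains $\delta^Q(1/d) = c/d^d$, which is strictly positive for $c > 0$. At $x = 1/(d-1)$ one gets
\[
  \delta^Q\!\left(\tfrac{1}{d-1}\right) = 1 - \tfrac{d}{d-1} + \tfrac{c}{(d-1)^d} = \tfrac{c - c_*}{(d-1)^d},
\]
which is strictly negative for $c < c_*$. Since $\delta^Q$ is continuous and real on $\mathbb{R}$, the intermediate value theorem produces a root $\rho$ with $1/d < \rho < 1/(d-1)$.

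\textbf{Uniqueness in the open disk $|x| < 1/(d-1)$.} I would apply Rouch\'e's theorem on the circle $|x| = r_0 := 1/(d-1)$, comparing the two terms $g(x) := 1 - dx$ and $h(x) := cx^d$ of $\delta^Q = g + h$. On $|x| = r_0$ the modulus $|h(x)| = c\, r_0^d = c/(d-1)^d$ is constant, while the reverse triangle inequality gives
\[
  |g(x)| = |1 - dx| \;\geq\; |\,d|x| - 1\,| \;=\; \tfrac{1}{d-1},
\]
with equality only at the single point $x = r_0$ on the positive real axis. The desired bound $|h(x)| < |g(x)|$ on the whole circle is therefore equivalent to $c/(d-1)^d < 1/(d-1)$ at that extremal point, i.e.\ to $c < (d-1)^{d-1} = c_*$, which is exactly our hypothesis. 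Rouch\'e then asserts that $\delta^Q = g+h$ has the same number of zeros in $|x| < r_0$ as $g(x) = 1-dx$, namely one; this unique zero must be the real root $\rho$ found above.

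\textbf{Main obstacle.} The only delicate point is verifying the strict Rouch\'e inequality on all of the circle $|x|=r_0$ rather than only at the real axis, since $|g|$ attains its minimum $1/(d-1)$ precisely at $x = r_0$. Handling this amounts to observing that the minimum of $|1-dx|$ on the circle is attained only at the one point where $dx$ is positive real and exceeds $1$, and that the resulting endpoint inequality $c/(d-1)^d < 1/(d-1)$ is exactly the hypothesis $c < c_*$. Once this is in hand, existence (from the intermediate value theorem) combined with the Rouch\'e count of one zero in $|x| < 1/(d-1)$ delivers both the containment $\rho \in [1/d, 1/(d-1)]$ and the uniqueness claim.
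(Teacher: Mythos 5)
Your proposal is correct and follows essentially the same route as the paper: the intermediate value theorem at the endpoints $1/d$ and $1/(d-1)$ for existence, then Rouch\'e's theorem on the circle $|x|=1/(d-1)$ for uniqueness. The only (inessential) difference is the splitting used in Rouch\'e --- you compare $1-dx$ against $cx^d$ (dominant part with its zero at $1/d$), while the paper compares $-dx$ against $1+cx^d$ (dominant part with its zero at $0$); both give exactly one zero in the disk, and your handling of the extremal point where $|1-dx|$ attains its minimum is sound since $|cx^d|$ is constant and strictly below $1/(d-1)$ when $c<c_*$.
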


\begin{proof} 
Checking signs we find that $\delta^Q (1/d) = 
c d^{-d} > 0$ while $\delta^Q (1/(d-1)) = - (d-1)^{-1} + c (d-1)^{-d}
< - (d-1)^{-1} + c_* (d-1)^{-d} = 0$, therefore there is at
least one root, call it $\rho$, of $\delta^Q$ in the interval 
$[1/d, 1/(d-1)]$.  On the other hand, when $|z| = 1/(d-1)$, we see 
that $|d z| \geq |1 + c z^d|$ and therefore, by applying Rouch{\'e}'s 
theorem to the functions $-dz$ and $1 + c z^d$, we see that
$\delta^Q$ has as many zeros on $|z| < 1/(d-1)$ as does $-dz$:
precisely one root, $\rho$.
\end{proof}

\subsection{Proof of Theorems~\protect{\ref{th:asym pos}} 
and~\protect{\ref{th:asm Fcd}} in the case $c > c_*$}

Again, by Lemmas~\ref{lem:GWS} and~\ref{lem:Fcd}, we may apply
part~\eqref{item:3} of Corollary~\ref{cor:asym pos} to the set $E$ of points 
$(y, \ldots , y)$ for all minimal modulus roots $y$ of $\delta^Q$.
The result then reduces to the following proposition.
\begin{prop}
For $c > c_*$, the set of minimal modulus roots of the polynomial 
$\delta^Q = 1 - dx + c x^d$ contains no point whose $d^{th}$ power 
is real and positive.
\end{prop}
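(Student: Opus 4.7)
My plan is to reduce the statement to the purely real-analytic claim that $\delta^Q(x) = 1 - dx + c x^d$ has no positive real root when $c > c_*$, and then to verify this by a one-variable calculus argument.

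First I would observe that the condition ``$x^d$ is real and positive'' is surprisingly restrictive. Any root $x$ of $\delta^Q$ satisfies $c x^d = dx - 1$. Since $c > 0$, if $x^d$ is a positive real number then $dx - 1$ is a positive real number as well, which forces $x$ itself to be real with $x > 1/d > 0$. Consequently, to prove the proposition it suffices to rule out any positive real root of $\delta^Q$, which is much stronger than what is literally required.

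Next I would study $\delta^Q$ restricted to $(0,\infty)$. Its derivative $-d + c d x^{d-1}$ has a unique positive zero at $x_c := c^{-1/(d-1)}$, which must be the global minimum of $\delta^Q$ on $(0,\infty)$ because $\delta^Q(0) = 1 > 0$ and $\delta^Q(x) \to +\infty$ as $x \to +\infty$. A one-line substitution gives $\delta^Q(x_c) = 1 - (d-1)\, c^{-1/(d-1)}$, and this is strictly positive exactly when $c^{1/(d-1)} > d-1$, i.e.\ when $c > c_* = (d-1)^{d-1}$. Hence $\delta^Q$ is strictly positive on $(0,\infty)$ whenever $c > c_*$, completing the proof.

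There is no serious obstacle, since the whole argument comes down to locating the minimum of an explicit real function; the only conceptual point to flag is that the $d$-th-power hypothesis is precisely what collapses the complex root-location problem to a real one. As a bonus, this approach makes it transparent why the phase transition occurs at $c_* = (d-1)^{d-1}$: this is the value at which the minimum of $\delta^Q$ on $(0,\infty)$ passes through zero, matching the critical-point/amoeba-boundary picture of Proposition~\ref{pr:d-1}.
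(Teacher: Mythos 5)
Your proof is correct and takes essentially the same route as the paper: the core of both arguments is the observation that on $(0,\infty)$ the function $\delta^Q$ has its minimum at $c^{-1/(d-1)}$ with value $1-(d-1)c^{-1/(d-1)}$, which is strictly positive precisely when $c>c_*$. The only (minor) difference is your reduction step: by using positivity of $x^d$ directly at a root (so that $dx-1=cx^d>0$ forces $x$ real with $x>1/d$), you bypass the paper's separate even/odd-$d$ discussion of negative real roots, a slight streamlining of the same argument.
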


\begin{proof}
First, if $z^d$ is real then the imaginary part
of $\delta^Q (z)$ is equal to the imaginary part of $-dz$, hence 
any root $z$ of $\delta^Q$ with $z^d$ real is itself real.

Next we check that $\delta^Q$ has no positive real roots.  
Differentiating $\delta^Q (x)$ with respect to $x$ gives 
the increasing function $d (-1 + cx^{d-1})$ with a
unique zero at $c^{-1/(d-1)}$.  This gives the location of the
minimum of $\delta^Q$ on $\R^+$, where the function value is
$1 - d c^{-1/(d-1)} + c^{1 - d/(d-1)} = 1 - (d-1) / c^{1/(d-1)}$ 
which is positive because $c > (d-1)^{d-1}$.  

If $d$ is even, $\delta^Q$ clearly has no negative real roots, 
hence no real roots at all, finishing the proof in this case.
If $d$ is odd $\delta^Q$ will have a negative real root $u$, however 
because $d$ is odd, the product of the coordinates of 
$(u, \ldots , u)$ is $u^d < 0$. 
\end{proof}

We conjecture that the roots of minimal modulus when $c>c_*$ are 
always a complex conjugate pair, however this determination does not
affect our positivity results.

\subsection{Proof of Theorem~\protect{\ref{th:d-1}}}
When $c < c_*$ we have seen that there is a single real minimal point 
$(\rho_c, \ldots , \rho_c)$ in the diagonal direction and that
$\rho_c \uparrow 1/(d-1)$ as $c \uparrow c_*^-$.  
The limit from below in Theorem~\ref{th:d-1} then follows 
directly from Theorem~\ref{th:asm Fcd}.

For the limit from above, 
it suffices to show that in the diagonal direction, 
for $c$ sufficiently close to $c_*$ and greater, $E$ consists 
of a single diagonal complex conjugate pair $(\zeta_c, \ldots , \zeta_c)$
and $(\overline{\zeta_c}, \ldots , \overline{\zeta_c})$, and that
$\overline{\zeta_c} \to 1/(d-1)$ as $c_* \downarrow c$.
First, we check that at $c = c_*$ the unique minimum modulus
root of $\delta^Q$ is the doubled root at $1/(d-1)$.  For $c = c_*$,
the first and third terms of $\delta^Q = 1 - dz + c_* z^d$ have modulus~1
and $1/(d-1)$ when $|z|=1/(d-1)$, respectively, summing to the modulus of 
the middle term; therefore if $\delta^Q(z) = 0$ and $|z| = 1/(d-1)$ then 
the third term is positive real.  But then the second term must be positive 
real too, hence the unique solution of modulus at most $1/(d-1)$ is 
$z = 1/(d-1)$.  A quick computation shows the multiplicity to be precisely~2.  
We know that for $c > c_*$ there are no real roots.  Therefore, as $c$ 
increases from $c_*$, the minimum modulus doubled root splits into two 
conjugate roots, which, in a neighborhood of $c_*$, are still the only 
minimum modulus roots.  

\section{Lacuna computations} \label{sec:lacuna}

Theorem~\ref{th:lacuna} is the subject of forthcoming work~\cite{BMP-lacuna}.
Theorem~\ref{th:d-1 critical} follows immediately, 
with the specifications: $d \geq 4$ and even, $c = c_*$, $k=1$,
$P = 1$, $Q = Q_{c,d}$, $\zz_* = (1/d, \ldots , 1/d)$,
$\rhat = (1, \ldots , 1)$, $B$ is the component of the
complement of the amoeba of $Q$ containing $(a, \ldots , a)$
for $a < - \log d$, $\xx_* = (-\log d, \ldots , - \log d)$,
$\yy_* = {\bf 0}$ and $\nbd$ taken to be the diagonal. 
Proposition~\ref{pr:d-1} guarantees the correct shape for 
the tangent cone to $Q$ at $\zz_*$.

\begin{theorem} \label{th:lacuna}
Suppose $F = P/Q^k$ with $P$ a holomorphic function and $Q$ a real 
Laurent polynomial.  Fix $\rhat \in \RP^d$, let $B$ be a component 
of the complement of the amoeba of $Q$, let $\sum_\rr a_\rr \zz^\rr$ 
be the Laurent expansion for $F$ convergent for $\zz = \exp (\xx + i \yy)$ 
and $\xx \in B$.  Let $\xx_* \in \partial B$ be a maximizing point
for $\rr \cdot \xx$ on $\partial B$.  Assume that $\sing$ has a 
unique singularity $\zz_* = \exp (\xx_* + i \yy_*)$, and that
the tangent cone of $Q$ at $\zz$ transforms by a real linear map
to $z_d^2 - \sum_{j=1}^{d-1} z_j^2$.  Let $\nbd$ be any closed cone
such that $\xx_*$ maximizes $\rr \cdot \xx$ for all $\rr \in \nbd$.

If $d > 2k$ is even then there is an $\ee > 0$ and a chain 
$\Gamma$ contained in the set $\sing_{\-\ee} := \{ \zz \in \sing : 
|\zz^{-\rr}| \leq \exp ( - \rr \cdot \xx_* - \ee |\rr|)$ such that 
\begin{equation} \label{eq:int chain}
a_\rr = \int_\Gamma \zz^{-\rr} \frac{P}{Q^k} \frac{d\zz}{\zz} \, .
\end{equation}
In other words, the chain of integration can be slipped below 
the height of the singular point. 
\end{theorem}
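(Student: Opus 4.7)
The plan is to express $a_\rr$ as a multivariate Cauchy integral
\[ a_\rr = \frac{1}{(2\pi i)^d}\int_{\TT(\bbeta)} \zz^{-\rr} \frac{P}{Q^k} \frac{d\zz}{\zz} \]
with $\bbeta \in B$ chosen close to $\xx_*$, and to homotope the cycle of integration across the height level $c := -\rr \cdot \xx_*$ so that the resulting chain lies in $\sing_{-\ee}$. Outside a fixed neighborhood of $\zz_*$ the amoeba boundary component through $\xx_*$ is smooth and $h_\rhat|_{\sing}$ is bounded uniformly below $c - \ee|\rr|$ for $\rhat \in \nbd$; a tube/intersection-cycle construction in the spirit of~\cite[Appendix~A]{PW-book} therefore produces a chain on $\sing$ of height at most $c - \ee|\rr|$ outside of any prescribed neighborhood of $\zz_*$. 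The entire obstruction to continuing the deformation is concentrated at the single singular point $\zz_*$.

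Near $\zz_*$, apply the real linear change of coordinates provided by the hypothesis so that $Q$ takes the form $\tilde{Q}_0(\ww) = w_d^2 - \sum_{j<d} w_j^2$ modulo cubic terms, where $\ww$ parametrizes $\zz_* e^{i \cdot}$ in the new coordinates. The local real variety $\sing_\R$ is a real double cone perturbed at order three, and $\sing$ itself is a perturbation of the complex quadric $\{\tilde{Q}_0 = 0\}$. A real Morse-type lemma shows that the homotopy class of the local chain pushing the Cauchy cycle past $\zz_*$ is determined by the tangent cone alone, and is a Lefschetz thimble attached to the vanishing $(d-1)$-sphere in the complex quadric.

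The heart of the argument is a residue computation. Using Leray's formula, the integral of $\zz^{-\rr} P / Q^k$ against $d\zz/\zz$ over the local difference chain reduces to a $(d-1)$-dimensional integral of a $(k-1)^{\mathrm{st}}$-order Leray residue over the vanishing sphere. For $\tilde{Q}_0$ with $P$ replaced by its value at $\zz_*$, this is exactly the residue that arises in Petrovsky--Atiyah--Bott--G{\aa}rding lacuna theory for the wave operator, and it vanishes identically when $d$ is even and $d > 2k$, the condition corresponding to strict Huygens' principle in spatial dimension $d-1$. A Taylor expansion around $\zz_*$ shows that the same vanishing controls the leading behavior for the full $P$ and $Q$, so the deformation can be completed with no net contribution and the resulting chain $\Gamma \subset \sing_{-\ee}$ represents $a_\rr$.

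The main technical obstacle is verifying that the contribution of higher-order terms of $Q$ and $P$, together with the Jacobian of the coordinate change, continue to vanish or contribute only exponentially negligible error. This requires an equivariant Morse lemma adapted to real Lorentzian singularities and a scaling argument showing that the non-quadratic corrections enter as perturbations of the exact lacuna residue identity. These verifications form the technical core of the forthcoming work~\cite{BMP-lacuna}, which we invoke here as a black box specialized to the GRZ denominator $Q_{c_*,d}$ to deduce Theorem~\ref{th:d-1 critical}.
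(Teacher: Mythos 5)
Your overall architecture --- push the Cauchy torus past $\xx_*$, observe that the only obstruction is localized at the unique singular point $\zz_*$, and analyze the local model given by the Lorentzian quadric, with the parity condition $d>2k$ even read off from Petrovsky/Atiyah--Bott--G{\aa}rding lacuna theory --- is the same frame as the paper's sketch. The genuine gap is in the step you make the heart of the argument. You establish (modulo the residue computation) that the integral of the \emph{model} integrand over the local thimble vanishes, and then assert that a Taylor/scaling argument controls the higher-order terms of $Q$ and $P$. Vanishing of the leading-order residue only kills the leading asymptotic term; it does not show that the full local contribution is exactly zero, and a perturbative bound on the corrections yields at best a polynomially smaller contribution at the \emph{same} exponential rate $\exp(-\rr\cdot\xx_*)$. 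The theorem claims strictly more: an $\ee>0$ and a chain $\Gamma$ of height at most $-\rr\cdot\xx_*-\ee|\rr|$ representing $a_\rr$, i.e., an exponential drop, which is exactly what Theorem~\ref{th:d-1 critical} needs. For that one needs a homological statement, not an integral identity: the local obstruction cycle at $\zz_*$ must itself be null-homologous below the critical height (the Petrovsky-type condition for the quadric), a topological fact which is stable under passing from the tangent cone to the actual $Q$ and therefore disposes of the higher-order terms for free. This is precisely what the paper's sketch supplies --- near $\zz_*$ the residue cycle is a sphere together with a hyperboloid, and for even $d$ the sphere shrinks and the hyperboloid can be folded back on itself so the chain literally vanishes in a neighborhood of $\zz_*$ --- and it is why the chain can be slipped below rather than merely contributing a smaller term.

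Separately, you invoke the forthcoming work~\cite{BMP-lacuna} as a black box for ``the technical core''; since Theorem~\ref{th:lacuna} is itself the subject of that work, this leaves your write-up a sketch on par with the paper's rather than an independent proof. If you want to keep the residue-theoretic flavor, the repair is to replace ``the integral over the thimble vanishes for the model'' by ``the class of the local Leray/Petrovsky cycle vanishes in the relevant relative homology when $d>2k$ is even,'' and then deduce both the exact cancellation and its robustness under the cubic and higher terms from that single topological fact.
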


\begin{proof}[Sketch of proof:]
Expand the torus $\TT$ of integration to $\zz_*$ and just beyond.
The integral~\eqref{eq:cauchy} turns into a residue integral 
over an intersection cycle swept out by the expanding torus;
see, e.g.~\cite[Appendix~A.4]{PW-book}.  For small perturbations 
$Q_\ee$ of $Q$, the residue cycle is the union of a sphere surrounding $\zz_*$
and a hyperboloid intersecting the sphere.  As $Q_\ee \to Q$, this 
cycle may be deformed so that the sphere shrinks to a point while the
hyperboloid's neck also constricts to a point.  The hyperboloid may then
be folded back on itself so that in a neighborhood of $\zz_*$, the
chain vanishes, leaving a chain $\Gamma$ supported below the height
of $\zz_*$.
\end{proof}  

\appendix
\section{Appendix A: Maple Code}

Maple worksheets going through the calculations discussed above can be found at 
\url{https://github.com/smelczer/SymmetricRationalFunctionsAofA} ;
we include the main component of those worksheets, code giving dominant smooth asymptotics, 
here for archival purposes.

\begin{align*}
&\text{smoothASM := proc($G,H,\text{vars},\text{pt}$)}\\
&\qquad \text{local $N,i,j,M,\text{HES},C,U,\text{lambda},\text{sbs}$:} \\
&\qquad N := \text{nops}(\text{vars}): \\[+2mm]
&\qquad\text{\# Get the Hessian determinant of the phase implicitly}\\
&\qquad\text{for $i$ from 1 to $N$ do for $j$ from 1 to $N$ do} \\
&\qquad\qquad U[i,j] := \text{vars}[i] \cdot \text{vars}[j] \cdot \text{diff}(Q,\text{vars}[i],\text{vars}[j]): \\
&\qquad\text{od: od:} \\
&\qquad \text{lambda} := x \cdot \text{diff}(Q,x): \\
&\qquad\text{for $i$ from 1 to $N-1$ do for $j$ from 1 to $N-1$ do} \\
&\qquad\qquad \text{if $i <> j$ then } M[i,j] := 1 + 1/\text{lambda} \cdot (U[i,j]-U[i,N]-U[j,N]+U[N,N]):  \\
&\qquad\qquad \text{else } M[i,j] := 2 + 1/\text{lambda} \cdot (U[i,i]-2 \cdot U[i,N]+U[N,N]): \\
&\qquad\quad \text{fi:}  \\
&\qquad\text{od: od:} \\
&\qquad\text{HES} := \text{LinearAlgebra[Determinant](Matrix([seq([seq($M[i,j], i=1..N-1)$], $j=1..N-1$)]))}: \\[+2mm]
&\qquad C := \text{simplify}(-G / \text{vars}[-1] / \text{diff}(H,\text{vars}[-1]) \cdot \text{HES}{\mathchar"5E}(-1/2) \cdot (2\cdot\text{Pi}){\mathchar"5E}((1-N)/2)); \\
&\qquad \text{sbs := seq}(\text{vars}[j]=\text{pt}[j],j=1..N): \\
&\qquad \text{return eval}(1/\text{mul}(j,j=\text{pt})){\mathchar"5E}n \cdot n{\mathchar"5E}((1-N)/2) \cdot \text{eval}(\text{subs}(\text{sbs}, C)): \\
&\text{end:}
\end{align*}

%%
%% Bibliography
%%

\bibliography{Posbib}

\begin{thebibliography}{10}

\bibitem{asz-clausen}
G.~Almkvist, D.~van Straten, and W.~Zudilin.
\newblock Generalizations of {C}lausen's formula and algebraic transformations
  of {C}alabi--{Y}au differential equations.
\newblock {\em Proc. Edin. Math. Soc.}, 54:273--295, 2011.

\bibitem{askey-pos72}
R.~Askey and G.~Gasper.
\newblock Certain rational functions whose power series have positive
  coefficients.
\newblock {\em Amer. Math. Monthly}, 79:327--341, 1972.

\bibitem{askey-pos77}
R.~Askey and G.~Gasper.
\newblock Convolution structures for {L}aguerre polynomials.
\newblock {\em J. {D'Analyse} Math.}, 31:48--68, 1977.

\bibitem{BMP-lacuna}
Y.~Baryshnikov, S.~Melczer, and R.~Pemantle.
\newblock Asymptotics of multivariate sequences in the presence of a lacuna.
\newblock In preparation, 2018.

\bibitem{BP-cones}
Y.~Baryshnikov and R.~Pemantle.
\newblock Asymptotics of multivariate sequences, part {III}: quadratic points.
\newblock {\em Adv. Math.}, 228:3127--3206, 2011.

\bibitem{BellGerhold2007}
J.~P. Bell and S.~Gerhold.
\newblock On the positivity set of a linear recurrence sequence.
\newblock {\em Israel J. Math.}, 157:333--345, 2007.

\bibitem{borcea-branden-LYPS2}
J.~Borcea and P.~Br{\"a}nd{\'e}n.
\newblock The {Lee}-{Y}ang and {P\'o}lya-{S}chur programs, {II}: {T}heory of
  stable polynomials and applications.
\newblock {\em Comm. Pure Appl. Math.}, 62:1595--1631, 2009.

\bibitem{Christol1984}
G.~Christol.
\newblock Diagonales de fractions rationnelles et equations
  diff{\'e}rentielles.
\newblock In {\em Study group on ultrametric analysis, 10th year: 1982/83,
  {N}o. 2}, pages Exp. No. 18, 10. Inst. Henri Poincar{\'e}, Paris, 1984.

\bibitem{flajolet-sedgewick-anacomb}
P.~Flajolet and R.~Sedgewick.
\newblock {\em Analytic Combinatorics}.
\newblock Cambridge University Press, 2009.
\newblock URL: \url{http://algo.inria.fr/flajolet/Publications/books.html}.

\bibitem{Furstenberg1967}
H.~Furstenberg.
\newblock Algebraic functions over finite fields.
\newblock {\em J. Algebra}, 7:271--277, 1967.

\bibitem{zb-pos-el83}
J.~Gillis, B.~Reznick, and D.~Zeilberger.
\newblock On elementary methods in positivity theory.
\newblock {\em SIAM J. Math. Anal.}, 14:396--398, 1983.

\bibitem{hautus-klarner-diagonal}
M.~Hautus and D.~Klarner.
\newblock The diagonal of a double power series.
\newblock {\em Duke Math. J.}, 23:613--628, 1971.

\bibitem{kauers-pos07}
M.~Kauers.
\newblock Computer algebra and power series with positive coefficients.
\newblock In {\em {Proc. FPSAC 2007}}, 2007.

\bibitem{kz-pos08}
M.~Kauers and D.~Zeilberger.
\newblock Experiments with a positivity-preserving operator.
\newblock {\em Exper. Math.}, 17:341--345, 2008.

\bibitem{koornwinder-pos78}
T.~Koornwinder.
\newblock Positivity proofs for linearization and connection coefficients of
  orthogonal polynomials satisfying an addition formula.
\newblock {\em J. London Math. Soc. (2)}, 18(1):101--114, 1978.

\bibitem{Koutschan2010b}
C.~Koutschan.
\newblock Holonomic{F}unctions ({U}ser's {G}uide).
\newblock Technical report, no. 10-01 in RISC Report Series, University of
  Linz, Austria, January 2010.

\bibitem{lipshitz-diagonal}
L.~Lipshitz.
\newblock The diagonal of a ${D}$-finite power series is ${D}$-finite.
\newblock {\em J. Algebra}, 113(2):373--378, 1988.

\bibitem{melczer-phd}
S.~Melczer.
\newblock {\em Analytic combinatorics in several variables: effective
  asymptotics and lattice path enumeration}.
\newblock PhD thesis, University of Waterloo, 2017.
\newblock URL: \url{https://arxiv.org/abs/1709.05051}.

\bibitem{MelczerSalvy2016}
S.~Melczer and B.~Salvy.
\newblock Symbolic-numeric tools for analytic combinatorics in several
  variables.
\newblock In {\em Proceedings of the ACM on International Symposium on Symbolic
  and Algebraic Computation}, ISSAC '16, pages 333--340, New York, NY, USA,
  2016. ACM.

\bibitem{PW-book}
R.~Pemantle and M.~Wilson.
\newblock {\em Analytic Combinatorics in Several Variables}, volume 340 of {\em
  Cambridge Studies in Advanced Mathematics}.
\newblock Cambridge University Press, New York, 2013.

\bibitem{Polya1921}
G.~P{\'o}lya.
\newblock Sur les s{\'e}ries enti{\`e}res, dont la somme est une fonction
  alg{\'e}brique.
\newblock {\em L'Enseignement Math{\'e}matique}, 22:38--47, 1921.

\bibitem{ss-pos13}
A.~Scott and A.~Sokal.
\newblock Complete monotonicity for inverse powers of some combinatorially
  defined poynomials.
\newblock {\em Acta Math.}, 213:323--392, 2013.

\bibitem{straub-pos}
A.~Straub.
\newblock Positivity of {S}zeg\"o's rational function.
\newblock {\em Adv. Appl. Math.}, 41(2):255--264, 2008.

\bibitem{s-apery}
A.~Straub.
\newblock Multivariate {A}p{\'e}ry numbers and supercongruences of rational
  functions.
\newblock {\em Algebra Number Theory}, 8:1985--2008, 2014.

\bibitem{sz-pos}
A.~Straub and W.~Zudilin.
\newblock Positivity of rational functions and their diagonals.
\newblock {\em J. Approx. Theory}, 195:57--69, 2015.

\bibitem{szego-pos33}
G.~Szeg{\"o}.
\newblock {\"U}ber gewisse {P}otenzreihen mit lauter positiven {K}oeffizienten.
\newblock {\em Math. Zeit.}, 37:674--688, 1933.

\end{thebibliography}

\end{document}